\numberwithin{equation}{section}
\newtheorem{theorem}{Theorem}[section]  
\newtheorem{theorem?}{``Theorem''}[section]
\newtheorem{lemma}[theorem]{Lemma}
\theoremstyle{definition}
\newtheorem{question}{Question}
\theoremstyle{remark}
\newtheorem{remark}[theorem]{Remark}  
\newcommand{\R}{{\mathbb R}}
\newcommand{\C}{{\mathbb C}}
\newcommand{\N}{{\mathbb N}}
\newcommand{\Z}{{\mathbb Z}}
\renewcommand{\a}{\alpha}
\renewcommand{\b}{\beta}
\renewcommand{\d}{\partial}
\begin{document}
\title[local zeta functions]
{
Meromorphy of local zeta functions
in smooth model cases
} 
\author{Joe Kamimoto and Toshihiro Nose}
\address{Faculty of Mathematics, Kyushu University, 
Motooka 744, Nishi-ku, Fukuoka, 819-0395, Japan} 
\email{
joe@math.kyushu-u.ac.jp}
\address{Faculty of Engineering, Fukuoka Institute of Technology, 
Wajiro-higashi 3-30-1, Higashi-ku, Fukuoka, 811-0295, Japan}
\email{ 
nose@fit.ac.jp}
%%%
\keywords{local zeta functions, 
meromorphic continuation, flat functions}
\subjclass[2000]{11S40 (26E10).}
\maketitle
%\date{}

%\vspace{9 em}

\begin{abstract}
It is known that 
local zeta functions associated with real analytic 
functions can be analytically continued 
as meromorphic functions to the whole complex plane.
But, in the case of general ($C^{\infty}$) smooth functions, 
the meromorphic 
extension problem is not obvious.
Indeed, it has been recently shown that
there exist specific smooth functions whose 
local zeta functions have singularities different from poles.
In order to understand the situation of 
the meromorphic extension in the smooth case, 
we investigate a simple but essentially important case, 
in which the respective function is expressed as 
%In connection with the investigation of such singularities, 
$u(x,y)x^a y^b +$ flat function, 
where $u(0,0)\neq 0$ and $a,b$ are nonnegative integers.
%By using the Newton polyhedron, 
After classifying flat functions into four types, 
we precisely investigate the meromorphic extension
of local zeta functions in each cases.  
Our results show new interesting phenomena 
in one of these cases. 
Actually, when $a<b$, 
local zeta functions can be 
meromorphically extended to the half-plane
${\rm Re}(s)>-1/a$ and
their poles on the half-plane 
are contained in the set 
$\{-k/b:k\in\N \,\,{\rm with }\,\, k<b/a\}$.
\end{abstract}

%\clearpage

%\tableofcontents

%%%%%%%%%%%%%%%%%%%%%%%
%\setcounter{section}{-1}

\section{Introduction}

Let us consider the integrals of the form
\begin{equation}\label{eqn:1.1}
Z_f(\varphi)(s)=\int_{\R^2}|f(x,y)|^s \varphi(x,y)dxdy
\quad\,\,\, \text{for $s\in \C$,}
\end{equation}
where
$f, \varphi$ are real-valued ($C^{\infty}$) smooth functions
defined on a sufficiently small open neighborhood $U$ of the origin
in $\R^2$, 
and 
the support of $\varphi$ is contained in $U$.
Since the integrals $Z_f(\varphi)(s)$
converge locally uniformly 
on the half-plane ${\rm Re}(s)>0$,
they become holomorphic functions
there, which are sometimes called {\it local zeta functions}.
It has been known in many cases that
they can be analytically continued 
to wider regions. 
The purpose of this paper is to understand 
the analytic continuation of local zeta functions.
%in the case when $f$ is smooth.  

When $f$ is real analytic, 
the analytic continuation of local zeta functions 
have been precisely understood.
By using Hironaka's resolution of singularities \cite{Hir64}, 
it was shown in \cite{BeG69}, \cite{Ati70},  etc. that
$Z_f(\varphi)(s)$ can be analytically continued as meromorphic functions 
to the {\it whole} complex plane
and their poles are contained in finitely many arithmetic progressions 
consist of negative rational numbers.
More precisely, 
Varchenko \cite{Var76} investigates the exact location of poles
of local zeta functions
by using the theory of toric varieties 
based on the {\it Newton polyhedron} of $f$
under some nondegeneracy condition. 
We remark that the above-mentioned results also hold in 
general dimensional case. 

On the other hand,
the smooth case (without real analyticity assumption) 
is not so well known.  
It is known in \cite{KaN16jmst} that 
the above result due to Varchenko \cite{Var76} 
can be naturally generalized when $f$ belongs to
a certain wide class of smooth functions containing 
the real analytic functions (see Appendix A.4).
But the problem of meromorphic extension of 
$Z_f(\varphi)$ 
is not obvious in the general smooth case. 
Indeed, it was observed in \cite{Gre06}, \cite{KaN19} that  
$Z_f(\varphi)(s)$ has a singularity different from a pole
in the case of a specific non-real analytic $f$ (see (\ref{eqn:2.9})). 
%At present, there seems no results treating general smooth 
%case in which meromorphy fails. 

In this paper, we precisely investigate the case when $f$ 
can be expressed in the form:
\begin{equation}\label{eqn:1.2}
f(x,y)=u(x,y)x^a y^b+ \mbox{(flat function at the origin)},
\end{equation}
where $a,b\in\Z_+$ %with $0\leq a \leq b $ 
and $u$ is a smooth function defined near the origin with
$u(0,0)\neq 0$.
(For a smooth function $g$ defined near the origin,  
we say $g$ is {\it flat} at the origin
if all the derivatives of $g$ vanish at the origin.)
This case is a simple generalization of the above 
non-real analytic function $f$ 
in \cite{Gre06}, \cite{KaN19}. 
Recalling that 
the monomial case 
is essentially important in the real analytic case
(see \cite{Var76}, \cite{AGV88}) and noticing that
the Newton polyhedron of $f$ in 
(\ref{eqn:1.2}) is a simple form:
$\{(\alpha,\beta)\in\R^2:\alpha\geq a,\,\, 
\beta\geq b\}$ (see Figure~1, below), 
the above case (\ref{eqn:1.2}) 
might be considered as a natural model 
in the smooth case.  

It has been recognized (c.f. \cite{AGV88}) that
the analytic continuation of local zeta functions
is deeply related to 
the behavior at infinity of 
oscillatory integrals of the form 
\begin{equation*}
I_f(\varphi)(\tau):=
\int_{\R^2}e^{i \tau f(x,y)}\varphi(x,y)dxdy 
\quad \mbox{for $\tau>0$},
\end{equation*}
where  $f$, $\varphi$ are the same as in (\ref{eqn:1.1}). 
The investigation of the behavior 
of oscillatory integrals has similarities to 
the analytic continuation of local zeta functions. 
In fact, 
the case when the phase $f$ is real analytic   
is well understood and, moreover, 
there have been results under some conditions in smooth case, 
which are direct generalization of 
those of the real analytic case
(\cite{Gre09}, \cite{IKM10}, 
\cite{IkM11jfaa}, \cite{KaN16jmst}, 
\cite{Gil18}, etc.).
But, it is shown in \cite{IoS97}, \cite{KaN17} that 
when the phase contains a flat function, 
the behavior of $I_f(\varphi)$ may have a different pattern  
from that in the real analytic case. 
It is also interesting to consider 
how flat functions in the phase affect
the behavior of $I_f(\varphi)(\tau)$, which 
is analogous to that of 
analytic continuation of local zeta functions
in this paper.

This paper is organized as follows. 
After explaining earlier work 
about analytic continuation of local zeta functions 
and related issues in Section 2, 
we state a main result in Section 3. 
Sections 4--6 are devoted to 
the proof of the main results. 
Our investigation of this paper can be explained    
without language of Newton polyhedra and 
related words, which are important 
in singularity theory. 
The meaning of our analysis is clearer
from the viewpoint of the geometry 
of Newton polyhedra. 
In the Appendix, 
the definitions of
Newton polyhedra and related important words 
are given 
and 
our studies are explained from these points of view.

{\it Notation and symbols.}

\begin{itemize}
\item 
We denote $\R_+:=\{x\in \R:x\geq 0\}$
and $\Z_+:=\{x\in \Z:x\geq 0\}$.
\item
In this paper, Newton polyhedra appear in many situations  
(see Appendix~A.1).
We use coordinates $(\alpha,\beta)$ for points in the plane
containing the Newton polyhedron 
in order to distinguish this plane from the $(x,y)$-plane. 
\end{itemize}

%%%%%%%%%%%%%%%%%%%%%%%%%%%%%%%%%%%%%%%%%%%%%%%%%%%%%%%%%%%%

\section{Known results and description of the problems}

In this paper, we always assume that $f$ satisfies 
\begin{equation}\label{eqn:2.1}
f(0,0)=0 \quad \mbox{and} \quad \nabla f (0,0)=(0,0).
\end{equation}
Unless (\ref{eqn:2.1}) is satisfied,  
every problem treated in this paper is easy.
As for $\varphi\in C_0^{\infty}(U)$, 
we sometimes give the following condition:
\begin{equation}\label{eqn:2.2}
\varphi(0,0)>0 \mbox{\,\,\, and \,\,\,} \varphi\geq 0
\mbox{ on $U$}.
\end{equation}
%At present, the analytic continuation has been poorly understood
%in the smooth case.
In order to investigate the analytic continuation of 
local zeta functions, 
we only use the half-plane of the form:
%\begin{equation}
${\rm Re}(s)>-\rho$
%\end{equation}
with $\rho>0$.
This is the reason why we observe the situation of analytic continuation
through the integrability 
of the integral (\ref{eqn:1.1}). 
(Of course,
many kinds of regions should be considered
in the future.)

%%%%%%%%%%%%%%%%%%%%%%%%%%%%%%%%%%%%%%%%%%%%
\subsubsection{Holomorphic continuation}
First, let us consider the following quantities:
\begin{equation}\label{eqn:2.3}
h_0(f,\varphi):=\sup\left\{\rho>0: 
\begin{array}{l} 
\mbox{The domain in which $Z_f(\varphi)$ can} \\ 
\mbox{be holomorphically continued} \\
\mbox{contains the half-plane ${\rm Re}(s)>-\rho$}
\end{array}
\right\}, 
\end{equation}
\begin{equation}\label{eqn:2.4}
h_0(f):=\inf\left\{h_0(f,\varphi):
\varphi\in C_0^{\infty}(U)\right\}.
\end{equation}
We remark that 
if $\varphi$ satisfies (\ref{eqn:2.2}), 
then $h_0(f,\varphi) = h_0(f)$ holds; 
but
otherwise, 
this equality does not always hold.
Indeed,
there exists $\varphi\in C_0^{\infty}(U)$ with 
$\varphi(0,0)=0$
such that $h_0(f,\varphi)>h_0(f)$ 
(see e.g. \cite{KaN16tams}).

From the integral representation in (\ref{eqn:1.1}),
the relationship between the holomorphy and 
the convergence of the integral implies that  
the quantity $h_0(f)$ is deeply related to
the following famous index:
\begin{equation}\label{eqn:2.5}
c_0(f):=\sup
\left\{\mu>0:
\begin{array}{l} 
\mbox{there exists an open neighborhood $V$ of} \\
\mbox{the origin in $U$ such that $|f|^{-\mu}\in L^1(V)$}
\end{array}
\right\},
\end{equation}
which is called 
{\it log canonical threshold} or {\it critical integrability index}
and has been deeply understood 
from various kinds of viewpoints. 
The equality $h_0(f)=c_0(f)$ always holds. 
In fact, 
the inequality $h_0(f)\geq c_0(f)$ is obvious; 
while the opposite inequality can be easily seen
by Theorem~5.1 in \cite{KaN19}.
In the real analytic case, 
since all the singularities of the extended $Z_f(\varphi)$ 
are poles on the real axis, 
the leading pole exists at $s=-h_0(f,\varphi)$.
In the seminal work of Varchenko \cite{Var76},
when $f$ is real analytic and satisfies some conditions,  
$h_0(f)$ can be expressed by using 
the {\it Newton polyhedron} of $f$ as
\begin{equation}\label{eqn:2.6}
h_0(f)=1/d(f),
\end{equation}
where $d(f)$ is the {\it Newton distance} of $f$
(see Appendix A.2).
%%%
%He uses the theory of toric varieties 
%based on the geometry of Newton polyhedra.
More detailed investigations into meromorphic continuation 
of $Z_f(\varphi)$ in various situations are in 
\cite{DeS89}, \cite{DeS92}, \cite{DNS05}, \cite{OkT13}, \cite{AGL19}, 
etc. 
An interesting work \cite{CGP13} treating the equality 
$c_0(f)=1/d(f)$ is from another approach.
We remark that these results treat the general dimensional case. 
In the same paper \cite{Var76}, 
Varchenko more deeply investigated the two-dimensional case. 
Indeed, without any assumption, he shows that 
the equality (\ref{eqn:2.6}) holds for real analytic $f$
in {\it adapted coordinates}. 
Here the definition of adapted coordinates is given
in Appendix A.3, below. 
These coordinates are important in the study of oscillatory integrals 
and their existence is shown in two dimensions 
in \cite{Var76}, \cite{PSS99}, \cite{IkM11tams}, etc.
More generally, let us consider the smooth case. 
%In the above cited paper \cite{KaN16jmsut},
%the authors show that Varchenko's result can be naturally
%generalized in a certain restricted class of smooth functions. 
%On the other hand, 
M. Greenblatt \cite{Gre06} obtains a sharp result 
which generalizes Varchenko's two-dimensional result.
%%%
\begin{theorem}[Greenblatt \cite{Gre06}]
When $f$ is a nonflat smooth function defined on $U$, 
the equation $c_0(f)=1/d(f)$
holds in adapted coordinates.
\end{theorem}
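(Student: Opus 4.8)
\emph{Proof plan.} Work in adapted coordinates and set $d:=d(f)$; since $f$ is nonflat, the Newton polyhedron $\Gamma_+(f)$ is a nonempty polyhedron and $0<d<\infty$. The plan is to prove the two inequalities $c_0(f)\le 1/d$ and $c_0(f)\ge 1/d$ separately. The upper bound holds in \emph{any} coordinates and only requires exhibiting a single region near the origin on which the integral of $|f|^{-\mu}$ diverges for $\mu>1/d$; the lower bound --- that $|f|^{-\mu}\in L^1$ near the origin whenever $\mu<1/d$ --- is the substantial half, and is where the hypothesis that the coordinates are adapted is used.

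For the upper bound, let $\gamma$ be the face of $\Gamma_+(f)$ hit by the bisectrix $\{\alpha=\beta\}$, and let $(p,q)$ be a primitive inner normal exposing $\gamma$, so that $p\alpha+q\beta\equiv(p+q)d$ on $\gamma$. If $\gamma$ is a vertex or a compact edge, restrict the integral of $|f|^{-\mu}$ to a cusp $\{\,x^{q/p}s_0\le y\le x^{q/p}s_1,\ 0<x<\delta\,\}$, where $[s_0,s_1]$ is a fixed interval avoiding the finitely many real roots of the one-variable polynomial $f_\gamma(1,\cdot)$. On this cusp $f_\gamma(x,x^{q/p}s)=x^{(p+q)d/p}f_\gamma(1,s)$ is comparable to $x^{(p+q)d/p}$, while the off-face monomials of $f$ together with its Taylor remainder of sufficiently high order are $O(x^{M})$ with $M$ as large as we please; hence $|f|\asymp x^{(p+q)d/p}$ there, and since $dy\asymp x^{q/p}\,dx$ on the cusp the restricted integral equals a constant times $\int_0^\delta x^{q/p-\mu(p+q)d/p}\,dx$, which diverges exactly when $\mu\ge 1/d$. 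When $\gamma$ is one of the non-compact edges one argues identically with the corresponding coordinate strip, comparing $|f|$ to a single monomial $|x|^{A}|y|^{B}$. In either case $c_0(f)\le 1/d$.

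For the lower bound the plan is to construct a Newton-polyhedron-adapted resolution: a partition of a small bidisc $W$ about the origin into finitely many pieces on each of which, after a smooth change of variables, $|f|$ is comparable to a monomial times a nowhere-vanishing factor. Expand $f(x,y)=\sum_{j=0}^{N}c_j(x)y^j+y^{N+1}R_N(x,y)$ in $y$ to a high order $N=N(d)$, with $c_j,R_N$ smooth; the real roots $y=\psi_\ell(x)$ of the dominant part of this polynomial, grouped by the weights defining the edges of $\Gamma_+(f)$, cut $W$ into (i) finitely many cusp-shaped root regions $\{\,|y-\psi_\ell(x)|\le\epsilon\,|x|^{r_\ell}\,\}$ and (ii) a complementary monomial region staying away from all the $\psi_\ell$. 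On the monomial region $|f|$ is comparable to a single monomial and $\int|f|^{-\mu}$ is finite for $\mu<1/d$ by direct computation; on each root region the straightening $y\mapsto y-\psi_\ell(x)$ --- iterated when the resulting function still carries a root of high multiplicity --- reduces the estimate to an integral $\int|x|^{c-A\mu}|y|^{-B\mu}\,dx\,dy$, the factor $|x|^{c}$ coming from Jacobians, which is again finite for $\mu$ below a threshold explicit in $(A,B,c)$. The decisive point is that the minimum of these thresholds over all pieces equals $1/d$: this is a combinatorial bookkeeping of the Newton data, and the only piece whose threshold could a priori be smaller is the one straddling the bisectrix direction, where it is forced to equal $1/d$ precisely because, the coordinates being adapted, the face polynomial of $f$ on the bisectrix face has no real root of multiplicity $\ge d$.

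The main obstacle is the lower-bound direction in the purely smooth category. Two points are not automatic: each straightening $y\mapsto y-\psi_\ell(x)$ introduces remainders that are controlled only through their order of vanishing (indeed, possibly flat), not through any algebraic structure, so one must verify on every region that such remainders stay negligible against the governing monomial and never spoil the comparison $|f|\asymp(\text{monomial})\cdot(\text{unit})$; and one must show that the iterated straightening terminates after finitely many steps, with both the number of steps and the truncation order $N$ bounded in advance in terms of the Newton polyhedron and the root multiplicities, with no appeal to analyticity. Both are supplied by the two-dimensional smooth resolution machinery, whose combinatorial heart is exactly the Newton polyhedron together with the multiplicities of the roots of the associated polynomial; the adaptedness hypothesis then enters only to pin the worst regional integrability threshold to $1/d$ rather than to something smaller, which is the whole content of the theorem.
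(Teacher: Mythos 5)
This statement is quoted from Greenblatt's paper \cite{Gre06}; the present paper gives no proof of it (it only imports, in Lemma~4.5, the sublevel-set estimate that underlies Greenblatt's argument), so your proposal can only be compared with that source. Your upper-bound half is fine and standard: integrating $|f|^{-\mu}$ over a cusp modeled on the principal face, where $|f|$ is comparable to the face part and the off-face Taylor terms plus a high-order remainder are negligible, does give divergence for $\mu\ge 1/d$, and this indeed works in any coordinates.

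The lower bound, which you yourself identify as the substantial half, is where the genuine gap lies: everything hard is delegated to ``the two-dimensional smooth resolution machinery.'' For merely smooth $f$ that machinery is not off-the-shelf --- controlling the flat remainders created by each straightening $y\mapsto y-\psi_\ell(x)$, showing the iteration terminates with a uniform bound on the number of steps and on the truncation order $N$, and verifying that $|f|$ stays comparable to a monomial times a unit on every piece is precisely the content that has to be proved, and in the smooth category it is exactly what flat perturbations threaten to destroy. Moreover the ``decisive point'' is pinned to an incorrect criterion: adaptedness does \emph{not} exclude real roots of the principal face polynomial of multiplicity exactly $d$ (the correct condition is multiplicity $>d$; e.g.\ $f=(x^2-y^2)^2$ is in adapted coordinates with $d=2$ and a real root of multiplicity $2$, and still $c_0(f)=1/2=1/d$). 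With the corrected criterion the bookkeeping does close --- in the edge zone around a root of multiplicity $m$ the regional threshold is $1/d$ provided $\mu m<1$, i.e.\ provided $m\le d$ --- but as written your argument rests on a false premise and omits the case $m=d$. Note also that Greenblatt's actual proof takes a different route that sidesteps monomialization altogether: he decomposes a neighborhood according to the Newton polygon and bounds the integral on each piece by the van der Corput-type sublevel estimate (Lemma~4.5 here, via Christ's lemma \cite{Chr85}), using only a lower bound on a single $y$- or $x$-derivative; this is considerably more robust against flat errors than the resolution scheme you outline, which is the same trade-off the present paper exploits in Section~6.
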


From the above result, 
holomorphic extension issue is well understood 
even in the smooth case. 
On the other hand, the situation of
the meromorphic extension 
is quite different from the holomorphic one
and has not been so well known. 

%%%%%%%%%%%%%%%%%%%%%%%%%%%%%%%
\subsubsection{Meromorphic continuation}

Corresponding to (\ref{eqn:2.3}), (\ref{eqn:2.4})
in the holomorphic continuation case,
we analogously define the following quantities:
\begin{equation}\label{eqn:2.7}
m_0(f,\varphi):=\sup\left\{\rho>0: 
\begin{array}{l} 
\mbox{The domain in which $Z_f(\varphi)$ can} \\ 
\mbox{be meromorphically continued} \\
\mbox{contains the half-plane ${\rm Re}(s)>-\rho$}
\end{array}
\right\}, 
\end{equation}
\begin{equation}\label{eqn:2.8}
m_0(f):=\inf\left\{m_0(f,\varphi):
\varphi\in C_0^{\infty}(U)\right\}.
\end{equation}
It is obvious that $h_0(f)\leq m_0(f)\leq m_0(f,\varphi)$ and 
$h_0(f,\varphi)\leq m_0(f,\varphi)$ always holds.
As mentioned in the Introduction, 
if $f$ is real analytic, then 
$m_0(f)=\infty$ holds; while
there exist specific non-real analytic functions $f$ such that
$m_0(f)<\infty$. 
Indeed, it was shown in \cite{KaN19} (see also \cite{Gre06}) 
that when
\begin{equation}\label{eqn:2.9}
f(x,y)=x^a y^b+x^ay^{b-q}e^{-1/|x|^p},
\end{equation}
where $a, b, q\in\Z_+$ satisfy that
$a<b$, \, $b\geq 2$, \, $1\leq q\leq b$, \,
$q$ is even,
$p>0$
and
$\varphi$ satisfies the condition \eqref{eqn:2.2}, 
$Z_f(\varphi)(s)$ has a non-polar singularity at 
$s=-1/b$,
which implies $m_0(f)=1/b$.
Note that $d(f)=b$ in this case. 
At present, properties of the singularity 
at $s=-1/b$ are not well understood
(see \cite{KaN19} for the details).

\begin{question}
For a given smooth functions $f$, 
determine (or estimate) the value of $m_0(f)$.
\end{question}

In this paper, we consider the above question 
in the case when $f$ has the form (\ref{eqn:1.2})
which is a natural generalization of (\ref{eqn:2.9}). 

%%%%%%%%%%%%%%%%%%%%%%%%%%%%%%%%%%%%%%%%%%%%%%%%%%%%

\section{Main results}

In this section, let $f$ be expressed as in (\ref{eqn:1.2})
on some small open neighborhood $U$ of the origin.
Without loss of generality, we assume that 
$a, b\in\Z_+$ in (\ref{eqn:1.2}) 
satisfy $a\leq b$ and $u(0,0)>0$. 
Moreover, we always assume that $b\geq 1$.
In fact, when $a=b=0$, $Z_f(\varphi)$ becomes an entire function
if the support of $\varphi$ is sufficiently small. 

It is easy to check the following Newton data of $f$ in (\ref{eqn:1.2}):
\begin{itemize}
\item The Newton polyhedron of $f$:   
$\Gamma_+(f)=
\{(\alpha,\beta)\in\R_+^n:\alpha\geq a, \, \beta\geq b \}$.
\item The Newton distance of $f$: $d(f)=b$.
\item The function $f$ in (\ref{eqn:1.2}) is expressed 
in an {\it adapted coordinate}.
\end{itemize}

\begin{lemma}
If $U$ is sufficiently small, then
$f$ in (\ref{eqn:1.2}) can be expressed on $U$ 
as one of the following four forms. 
\begin{enumerate}
\item[(A)]
$f(x,y)=
v(x,y)x^a y^b,$
\item[(B)]
$f(x,y)=
v(x,y)x^a y^b 
+ g(x,y),$
\item[(C)]
$f(x,y)=
v(x,y)x^a y^b 
+ h(x,y),$
\item[(D)]
$f(x,y)=
v(x,y)x^a y^b 
+ g(x,y)+h(x,y).$
\end{enumerate}
where $v,h,g$ are smooth functions defined on $U$ 
satisfying the following properties:
\begin{itemize}
\item $v(0,0)=u(0,0)\neq 0$.
\item 
$h, g$ are {\it non-zero} flat functions admitting the forms:
\begin{equation}\label{eqn:3.1}
g(x,y)=\sum_{j=0}^{b-1}y^{j}g_j(x)
\quad
\mbox{ and } \quad
h(x,y)=\sum_{j=0}^{a-1}x^{j}h_j(y),
\end{equation}
where $h_j, g_j$ are flat at the origin.
\end{itemize}
\end{lemma}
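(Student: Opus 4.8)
The plan is to produce the decomposition by two successive applications of Taylor's formula with integral remainder on a small box $U=(-\varepsilon,\varepsilon)^2$: first in the variable $y$, to split off the terms of $y$-degree less than $b$, and then in the variable $x$, to split off the terms of $x$-degree less than $a$. Concretely, write $f=u(x,y)x^ay^b+F$ with $F$ flat at the origin. Taylor expansion of $F$ in $y$ to order $b$ gives $F(x,y)=\sum_{j=0}^{b-1}y^{j}g_j(x)+y^{b}R(x,y)$ with $g_j(x)=\tfrac1{j!}\partial_y^{j}F(x,0)$ and $R$ smooth on $U$; a further Taylor expansion of $R$ in $x$ to order $a$ gives $R(x,y)=\sum_{i=0}^{a-1}x^{i}r_i(y)+x^{a}S(x,y)$ with $r_i(y)=\tfrac1{i!}\partial_x^{i}R(0,y)$ and $S$ smooth on $U$. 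Putting $g(x,y):=\sum_{j<b}y^{j}g_j(x)$, $h_i(y):=y^{b}r_i(y)$, $h(x,y):=\sum_{i<a}x^{i}h_i(y)$ and $v:=u+S$, one obtains $f=v\,x^ay^b+g+h$ on $U$, which is already of the shape (\ref{eqn:3.1}). The role of "$U$ sufficiently small'' is only to guarantee that the segments $\{(x,ty):t\in[0,1]\}$ and $\{(tx,y):t\in[0,1]\}$ along which the integral remainders are taken stay inside the domain; a box does this.

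It then remains to check the stated properties, in the following order. First, each $g_j$ is flat at $0$: since $F$ is flat at the origin, so is $\partial_y^{j}F$, and the restriction of a function flat at the origin to the $x$-axis is flat at $0$; from this, an elementary computation of mixed partials at the origin (using $\partial_y^{\ell}y^{j}|_{y=0}=j!\,\delta_{j\ell}$) shows that $g$ is flat at the origin. Consequently $y^bR=F-g$ is flat at the origin, hence its restriction to the $x$-axis of $\partial_x^{i}(y^bR)$, namely $i!\,y^{b}r_i(y)=i!\,h_i(y)$, is flat at $0$; so each $h_i$ is flat at $0$, and as before $h$ is flat at the origin. Therefore $x^ay^bS=F-g-h$ is flat at the origin, and comparing formal Taylor series at the origin --- that of $x^ay^bS$ is $x^ay^b$ times that of $S$, and it vanishes --- forces the Taylor series of $S$ to vanish, i.e.\ $S$ is flat at the origin; in particular $v(0,0)=u(0,0)+S(0,0)=u(0,0)\neq 0$.

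Finally I would conclude by a case split according to whether $g$ and $h$ vanish identically, dropping any identically-zero summand: $(g\equiv0,\,h\equiv0)$ gives form (A); $(g\not\equiv0,\,h\equiv0)$ gives (B); $(g\equiv0,\,h\not\equiv0)$ gives (C); $(g\not\equiv0,\,h\not\equiv0)$ gives (D). I do not expect a serious obstacle here; the two points that need care are the identity $v(0,0)=u(0,0)$, which rests on the elementary fact that if $x^ay^b\Psi$ is flat at the origin for a smooth $\Psi$ then $\Psi$ itself is flat there (seen via formal Taylor series), and the bookkeeping of the two expansions that ensures the cross coefficients $h_i$ come out divisible by $y^b$ --- which is exactly what upgrades "$r_i$ smooth'' to "$h_i$ flat''.
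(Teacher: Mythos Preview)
Your proof is correct and takes essentially the same approach as the paper's: Taylor's formula with integral remainder to peel off the terms of low degree in each variable, followed by verification that the resulting one-variable coefficients are flat. The paper applies a single joint two-variable Taylor identity directly to $f$ (so that $v(0,0)=u(0,0)$ drops out by evaluating the double-integral remainder at the origin), whereas you apply two successive one-variable expansions to the flat part $F$ and then deduce $v(0,0)=u(0,0)$ via the auxiliary observation that $S$ is flat; the resulting functions $g_j$, $h_i$, $v$ are in fact identical in the two arguments. One slip to fix: where you write ``restriction to the $x$-axis of $\partial_x^{i}(y^{b}R)$'' you mean the $y$-axis (i.e., set $x=0$), since that is what yields $i!\,y^{b}r_i(y)$.
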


%%%%%%%%%%%%%%%
\begin{proof}
For simplicity,
we use the following symbol:
For $(\alpha,\beta)\in \Z_+^2$,
\begin{equation*}
f^{\langle \alpha,\beta \rangle}(x,y):=
\frac{\d^{\alpha+\beta}f}{\d x^{\alpha}\d y^{\beta}}(x,y).
\end{equation*}
The Taylor formula implies 
\begin{equation*}%\label{eqn:}
\begin{split}
f(x,y)
&=
\sum_{\b=0}^{b-1}y^{\b}
A_{\beta}(x)
+
\sum_{\a=0}^{a-1}x^{\a}
B_{\alpha}(y)
+
x^{a}y^{b}C(x,y),
\end{split}
\end{equation*}
where
\begin{equation*}%\label{eqn:}
\begin{split}
&A_{\beta}(x)
:=\frac{x^a}{(a-1) ! \b!}\int_{0}^{1}(1-t)^{a-1}
f^{\langle a,\b \rangle}(tx,0)dt
 \mbox{\quad for } \b\in\{0,\ldots, b-1\}, \\
&B_{\alpha}(y)
:=\frac{y^b}{\alpha ! (b-1)!}
\int_{0}^{1}(1-t)^{b-1}
f^{\langle \a, b \rangle}(0,ty)dt
\mbox{\quad for }  \a\in \{0,\ldots,a-1\},\\
&C(x,y)
:=\frac{1}{(a-1)! (b-1)!}\int_{0}^{1}\int_{0}^{1}
(1-t_1)^{a-1}(1-t_2)^{b-1}
f^{\langle a,b \rangle}(t_1x,t_2y)
dt_1 dt_2.
\end{split}
\end{equation*}
Since $f^{\langle a,\b \rangle}(\cdot,0)$ 
is flat at the origin for $\b\in\{0,\ldots, b-1\}$, 
so is $A_{\b}$ 
for $\b\in\{0,\ldots, b-1\}$.
The flatness of $B_{\a}$ is similarly shown
for $\a\in\{0,\ldots,a-1\}$. 
An easy computation gives 
$C(0,0)=u(0,0)$.
Putting $g_j(x):=A_{j}(x)$, 
$h_j(y):=B_j(y)$ and $v(x,y):=C(x,y)$, 
we can obtain the lemma. 
\end{proof}
%%%%%%%%%%%%%%%

\begin{remark}
(1)\quad 
The example (\ref{eqn:2.9}) mentioned in 
Section~2 belongs to the case (B).

(2)\quad 
In Lemma~3.1,
$v$ does not always equal $u$ in (\ref{eqn:1.2}) 
on $U$ and, moreover, 
in the cases (B), (C), (D), $v$ cannot always be replaced 
by $v\equiv 1$ by using coordinate changes. 

(3)\quad 
It is easy to see the following equivalences. 
\begin{enumerate}
\item When $a=b$, (B) $\Longleftrightarrow$ (C).
\item When $a=0$, (A) $\Longleftrightarrow$ (C) and 
(B) $\Longleftrightarrow$ (D).
\end{enumerate}
The equivalence in (i) means that the roles of the $x$ and 
$y$ variables can be switched. 
\end{remark}

Let us observe the above classification 
from the viewpoint of the geometry of Newton polyhedra.
Of course, the existence of the flat functions $h,g$
give no influence on the shape of the Newton polyhedron 
$\Gamma_+(f)$. 
But, in Figure~1, 
we forcibly draw their influence by adding 
the points $(0,\infty)$, $(\infty,0)$ 
(see also Appendix A.4).

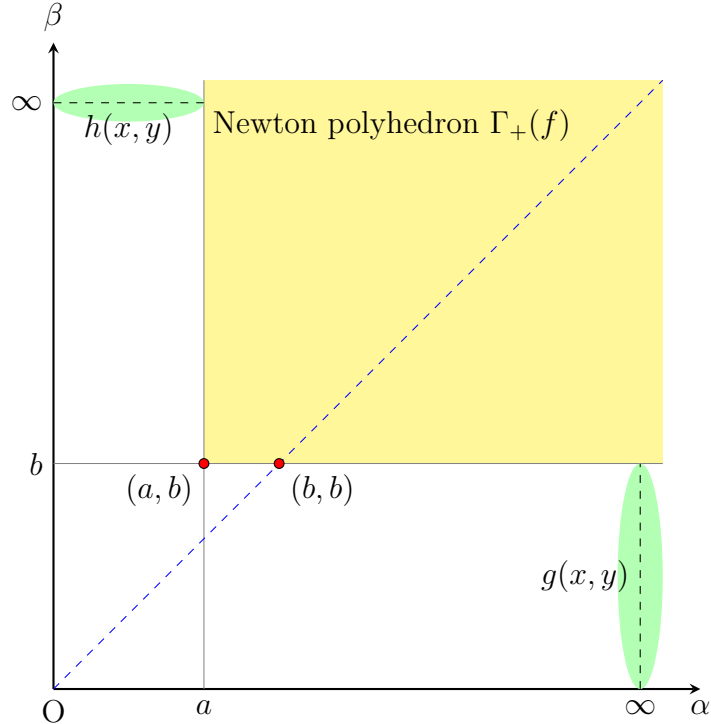
\begin{figure}[H]

\begin{tikzpicture}%[x=0.4cm,y=0.4cm]
\fill[yellow!50!] (-2,-1) -- (-2,4.1) -- (4.1,4.1) -- (4.1,-1);
%\useasboundingbox (-3,-3) rectangle (5,5);
   \draw [thick, -stealth](-4,-4)--(4.6,-4) node [anchor=north]{$\alpha$};
   \draw [thick, -stealth](-4,-4)--(-4,4.6) node [anchor=south]{$\beta$};

   \coordinate (S) at (0.5,3) node at (S) 
[label=above: Newton polyhedron $\Gamma_+(f)$ ]{};
   \node (S) at (-1.5,0) {};
   \draw [help lines] (-4,-1)--(4.1,-1);  
   \draw [help lines] (-2,-4)--(-2,4.1);  
   \draw [blue, dashed] (-4,-4) -- (4.1,4.1);
%   node [anchor=east]{${\rm Re}(s)=-c_0(f)$};
  \coordinate (a) at (-2,-4) node at (a) [below] {$a$};
  \coordinate (b) at (-4,-1) node at (b) [left] {$b$};
  \coordinate (i1) at (3.8,-4) node at (i1) [below] {$\infty$};
  \coordinate (i2) at (-4,3.8) node at (i2) [left] {$\infty$};
  \coordinate (c) at (-1,-1);
\filldraw[fill=red] (c) circle[radius=0.65mm] node [below right]{$(b,b)$};
  \coordinate (d) at (-2,-1);
\filldraw[fill=red] (d) circle[radius=0.65mm] node [below left] {$(a,b)$};
\fill[green!30] (3.8,-2.5) ellipse (0.3 and 1.5);
\fill[green!30] (-3,3.8) ellipse (1 and 0.25);
   \draw [dashed] (3.8,-4) -- (3.8,-1);
   \draw [dashed] (-4,3.8) -- (-2,3.8);
\coordinate (g) at (3.8,-2.5) node at (g) [left] {$g(x,y)$};
\coordinate (h) at (-3,3.8) node at (h) [below] {$h(x,y)$};
   \node [anchor=north] at (-4,-4) {O};
%  \node[label=(0,-2):$p$�̏ꍇ];
\end{tikzpicture} 

\caption[The Newton polyhedron $\Gamma_+(f)$]
{The Newton polyhedron $\Gamma_+(f)$}
\end{figure}

Let us investigate the quantities 
$h_0(f)$, $m_0(f)$ with $f$ in (\ref{eqn:1.2})
in each of the above cases. 
As explained in the previous section, 
Theorem~2.1 implies $h_0(f)=1/b$ in all the cases. 
Now, 
let us consider the value of $m_0(f)$.

In the case (A), it is easy to see that $m_0(f)=\infty$
(see also Appendix~A.5).
In the cases (B), (C), (D),
it follows from Theorem~2.1 that 
the estimate $m_0(f)\geq 1/b$ always holds. 
This estimate is optimal in the case (B).  
(Recall that $m_0(f)=1/b$ holds when $f$ is as in (\ref{eqn:2.9}).)
In the case (C), we see a new phenomenon of 
meromorphic continuation, 
which is a main theorem of this paper.
%%%%%%%%%%%%%%%%%%%%%%%%%%%%%%%%%%%%
\begin{theorem}\label{thm:1.2}
Let $a>0$ and 
let $f$ be expressed as in the case (C) in Lemma~3.1 on $U$.
If the support of $\varphi$ is sufficiently small,
then $Z_f(\varphi)(s)$
can be analytically continued as a meromorphic function
to the half-plane ${\rm Re}(s)>-1/a$.
Moreover, when $a<b$,
its poles on the region ${\rm Re}(s)>-1/a$ 
are contained in the set 
$\{-k/b:k\in \N \,\,{\rm with}\,\, k<b/a\}$.
In particular, $m_0(f)\geq 1/a$ holds.
\end{theorem}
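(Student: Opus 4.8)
The plan is to reduce the meromorphic continuation of $Z_f(\varphi)(s)$ to a one-dimensional analysis by separating the $x$ and $y$ variables. Recall that in case (C) we have $f(x,y)=v(x,y)x^a y^b+h(x,y)$ with $h(x,y)=\sum_{j=0}^{a-1}x^j h_j(y)$, where each $h_j$ is flat at the origin. The key observation is that $f$ contains no pure flat contribution in $y$ of order below $b$: writing $f(x,y)=y^b\big(v(x,y)x^a + y^{-b}h(x,y)\big)$ does not immediately work because $h$ is not divisible by $y^b$, but we can instead factor out $x^a$ from the leading term and treat $h$ as a perturbation. The first step is therefore to decompose $U$ near the origin into the region $\{|x|$ small$\}$ and work on $\{x\neq 0\}$, on which $h(x,y)=x^a\cdot\big(\text{something}\big)$ fails; so instead I would write $f(x,y)=x^a\big(v(x,y)y^b\big)+h(x,y)$ and use that on a set where $|h(x,y)|$ is dominated by $C|x|^a$ for a flat (hence fast-decaying) reason, the factor $x^a$ can be pulled out of $|f|^s$ up to a holomorphic-in-$s$ remainder.

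More concretely, the main step is to show that for $\mathrm{Re}(s)>-1/a$ one can write
\begin{equation*}
Z_f(\varphi)(s)=\int_{\R^2}|x|^{as}\,\big|v(x,y)y^b+x^{-a}h(x,y)\big|^s\varphi(x,y)\,dxdy,
\end{equation*}
and the point is that $x^{-a}h(x,y)=\sum_{j=0}^{a-1}x^{j-a}h_j(y)$ has coefficients that blow up like $x^{-a}$ but are multiplied by flat functions $h_j(y)$; after integrating in $y$ first over the support of $\varphi$, the flatness of $h_j$ forces these terms to be uniformly small, so the inner bracket stays bounded away from $0$ in modulus (using $v(0,0)>0$) on a small enough neighborhood, at least away from $y=0$. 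The contribution near $y=0$ is where the $y^b$ factor degenerates; there one performs the analogous analysis, obtaining a factor $|y|^{bs}$ times a nonvanishing amount, which gives poles only at $s=-k/b$, $k\in\N$. The condition $k<b/a$ in the pole set should emerge from matching the two regimes: once $-k/b<-1/a$, i.e. $k>b/a$, the corresponding candidate pole has already left the half-plane $\mathrm{Re}(s)>-1/a$ under consideration, so it simply does not appear.

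I would organize the argument as: (1) a partition of the support of $\varphi$ adapted to the curves $x=0$ and $y=0$, using a dyadic or smooth cutoff decomposition; (2) on each piece, an application of the change of variables and the flatness of $h_j$ to isolate a monomial factor ($|x|^{as}$ or $|y|^{bs}$) times an integrand holomorphic in $s$ on the relevant half-plane — this is where Theorem~2.1 (or the integrability estimates behind it, as in \cite{KaN19}) is invoked to guarantee the remaining integral converges and defines a holomorphic function; (3) summation/gluing of the pieces, tracking which candidate poles $-k/b$ survive in $\mathrm{Re}(s)>-1/a$. The $m_0(f)\geq 1/a$ conclusion is then immediate, and $h_0(f)=1/b$ from Theorem~2.1 pins down the leading pole.

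The hard part will be step (2) near the coordinate axes, specifically controlling the $x^{-a}h(x,y)$ term uniformly: a priori $x^{j-a}h_j(y)$ is singular as $x\to0$, and the naive bound $|h_j(y)|\le C_N|y|^N$ does not help on the locus $y=0$. The resolution must exploit that on case (C) there is genuinely \emph{no} flat function of the form $y^{j}g_j(x)$ with $j<b$ (that is exactly what distinguishes (C) from (B) and (D)), so along $x=0$ the function reduces to $x^a$-order vanishing controlled purely by $y^b$; making this rigorous requires a careful two-sided estimate of $|f(x,y)|$ from below by $c(|x|^a|y|^b)$ minus a flat error on a full neighborhood, and then a Fubini-type argument integrating the error term first in whichever variable renders it negligible. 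Establishing that lower bound — essentially that $f$ behaves like the monomial $x^ay^b$ up to flat perturbations that cannot destroy the meromorphy on $\mathrm{Re}(s)>-1/a$ — is the crux, and I expect it to occupy the bulk of Sections~4--6.
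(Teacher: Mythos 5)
Your plan stalls exactly at the point you label the crux, and the fix you sketch there cannot work. The bound you hope for --- ``$|f(x,y)|\geq c|x|^a|y|^b$ minus a flat error on a full neighborhood, so that $f$ behaves like the monomial $x^ay^b$ up to flat perturbations that cannot destroy the meromorphy'' --- is the wrong intuition: in case (C) the function $f$ genuinely vanishes on curves $x=\phi_j(y)$ with $\phi_j\not\equiv 0$ flat (take, e.g., $f=y^b(x^a-e^{-1/y^2})$), and near such curves $|f|^s$ with ${\rm Re}(s)<0$ blows up no matter how ``flat'' the perturbation is. Indeed the whole point of the paper (example (\ref{eqn:2.9}), and Remark~3.4(1) for case (C) itself) is that flat perturbations \emph{do} destroy meromorphy beyond a finite half-plane, so no argument that treats the flat part as a negligible error can isolate a factor $|x|^{as}$ with a bracket ``bounded away from $0$''; your bracket $v(x,y)y^b+x^{-a}h(x,y)$ in fact blows up as $x\to0$ for fixed $y$ with $h_0(y)\neq0$ and vanishes on the curves $x=\phi_j(y)$. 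Moreover, your fallback of invoking Theorem~2.1 (or the integrability estimates behind it) to make the remaining pieces holomorphic only yields convergence for ${\rm Re}(s)>-1/b=-1/d(f)$; it cannot reach the line ${\rm Re}(s)=-1/a$, which is the entire content of the theorem when $a<b$.

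What the paper actually does, and what is missing from your proposal, are two specific tools. First, Rychkov's factorization gives $f=\tilde v\,y^b\prod_{j=1}^a(x-\phi_j(y))$ with $\phi_j(y)=O(y^N)$ for every $N$; this justifies splitting the first quadrant into $U_1^{(m)}=\{x>y^m\}$ and $U_2^{(m)}=\{0<x\leq y^m\}$ (plus the part away from $y=0$), with a parameter $m$ that is eventually sent to infinity. On $U_1^{(m)}$ all the bad zero curves are excluded, $\tilde f=f/(x^ay^b)$ is bounded below and extends smoothly to the closure (Lemma~5.2), and the resulting integral $\int_{U_1^{(m)}}x^{as}y^{bs}\Psi(x,y;s)\,dxdy$ is continued explicitly by a model computation over the curved region (Lemma~4.3), producing candidate poles $-j/a$, $-k/b$, $-(m+l)/(am+b)$. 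Second, on the thin region $U_2^{(m)}$ (and away from $y=0$) no pointwise lower bound on $|f|$ is attempted at all: one writes $f=y^bF$ with $\partial_x^aF\geq\mu>0$ and applies the van der Corput-type sublevel estimate (Lemma~4.5) to $\int|F|^{{\rm Re}(s)}dx$, which converges for ${\rm Re}(s)>-1/a$ even though $F$ vanishes there; this gives holomorphy of that piece on ${\rm Re}(s)>-(m+1)/(am+b)$. Letting $m\to\infty$ both exhausts the half-plane ${\rm Re}(s)>-1/a$ and pushes the spurious candidate poles $-j/a$ and $-(m+l)/(am+b)$ out of it, leaving only $\{-k/b:k<b/a\}$. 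Without the factorization-based $m$-dependent decomposition and the van der Corput lemma (or substitutes for them), your steps (1)--(3) do not close.
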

%%%%%%%%%%%%%%%%%%%%%%%%%%%%%%%%%%%%

\begin{remark}
(1)\quad In the forthcoming paper \cite{KaN20}, 
we will show that
there exists a specific function $f$ of the form (C) such that
$Z_f(\varphi)$ has the non-polar singularity
at $s=-1/a$, which implies $m_0(f)=1/a$.
Therefore, 
the estimate $m_0(f)\geq 1/a$ in Theorem~3.3 is optimal.
Moreover, 
the optimality of
$m_0(f)\geq 1/b$ in the case (D) will also be shown
in the same paper.

(2)\quad 
From Lemma~6.2 (i) below playing essential roles in 
the proof of the above theorem, 
the readers might wonder if 
$\{-j/a:j\in\N\}$ is also contained in 
the set of candidate poles of $Z_f(\varphi)$. 
Since properties of 
the singularity of $Z_f(\varphi)$ on the line 
 ${\rm Re}(s)=-1/a$ 
have not been well-understood at present,  
$Z_f(\varphi)$ can be ragarded 
as a meromorphic function 
{\it only} on the region ${\rm Re}(s)>-1/a$
in general.
Noticing that $\{-j/a:j\in\N\}$ and 
$\{-k/b:k\geq b/a\}$ 
are outside of the region ${\rm Re}(s)>-1/a$, 
we see that 
$\{-k/b:1\leq k < b/a\}$ only appears in the theorem. 
\end{remark}

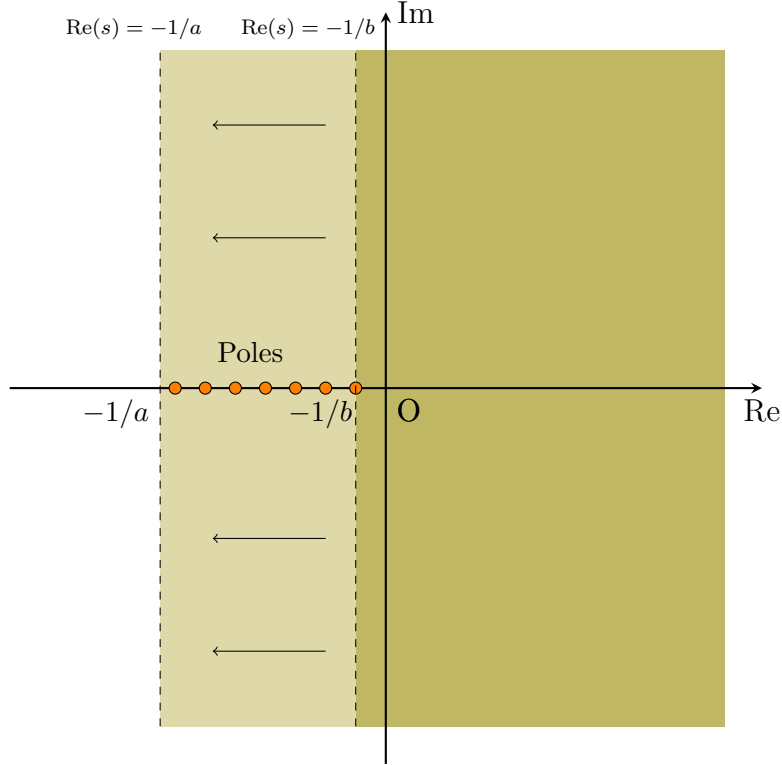
\begin{figure}[H]

\begin{tikzpicture}%[x=0.4cm,y=0.4cm]
%\useasboundingbox (-3,-3) rectangle (5,5);
   \fill[olive!60!] (-0.4,-4.5) -- (-0.4,4.5) -- (4.5,4.5) -- (4.5,-4.5);
 \fill[olive!30!] (-0.4,-4.5) -- (-0.4,4.5) -- (-3,4.5) -- (-3,-4.5);
   % ���W��
   \draw [thick, -stealth](-5,0)--(5,0) node [anchor=north]{Re};
   \draw [thick, -stealth](0,-5)--(0,5) node [anchor=west]{Im};
   \node [anchor=north west] at (0,0) {O};
   \node [anchor=north west] at (0,0) {O};
\coordinate (P1) at (-0.4,0);
\coordinate (P2) at (-0.8,0);
\coordinate (P3) at (-1.2,0);
\coordinate (P4) at (-1.6,0);
\coordinate (P5) at (-2.0,0);
\coordinate (P6) at (-2.4,0);
\coordinate (P7) at (-2.8,0);
%\coordinate (a) at (-3,0) 
\node [anchor=north east] at (-0.3,0) {{\small$-1/b$}};
\node [anchor=north east] at (-3,0) {{\small$-1/a$}};
%\coordinate (P8) at (-4.3,0);
\filldraw[fill=orange] (P1) circle[radius=0.8mm]; 
%node [anchor=north east]{{\small $-1/b$}};
\filldraw[fill=orange] (P2) circle[radius=0.8mm];
\filldraw[fill=orange] (P3) circle[radius=0.8mm];
\filldraw[fill=orange] (P4) circle[radius=0.8mm];
\filldraw[fill=orange] (P5) circle[radius=0.8mm];
\filldraw[fill=orange] (P6) circle[radius=0.8mm];
\filldraw[fill=orange] (P7) circle[radius=0.8mm];
%\filldraw[fill=orange] (P8) circle[radius=0.8mm];
   \draw [dashed] (-3,-4.5)--(-3,4.5); 
\node [anchor=south east] at (-2.3,4.5) {{\tiny ${\rm Re}(s)=-1/a$}};
   \draw [dashed] (-0.4,-4.5)--(-0.4,4.5); 
\node [anchor=south east] at (0,4.5) {{\tiny ${\rm Re}(s)=-1/b$}};

\node [above] at (-1.8,0.2) {{\small Poles}};

\draw [->, color=black] (-0.8,2)--(-2.3,2);
\draw [->, color=black] (-0.8,3.5)--(-2.3,3.5);
\draw [->, color=black] (-0.8,-2)--(-2.3,-2);
\draw [->, color=black] (-0.8,-3.5)--(-2.3,-3.5);
\end{tikzpicture}

\caption[]{Meromorphic continuation in the case (C)}
\end{figure}

%%%%
\begin{table}[H]
\caption[]{The values of $h_0(f)$ and $m_0(f)$.}
\begin{center}
\begin{tabular}{r|cccc} \toprule
%\multicolumn{1}{c}{} 
 & (A) & (B) & (C) & (D) \\  \midrule %\hline
$h_0(f)$ & $1/b$ & $1/b$ & $1/b$ & $1/b$ \\
$m_0(f)$  & $\infty$ & $\geq 1/b$ & $\geq 1/a$ & $\geq 1/b$ 
\\ \bottomrule
\end{tabular}
\end{center}
\end{table}
%%%%

Putting the above mentioned results together, 
one has Table~1. 
We remark that Table~1 establishes for $a>0$. 
But, by regarding $1/0$ as $\infty$ and 
recalling Remark 3.2 (ii), 
this restriction is not needed.

It should be expected that the inequalities ``$\geq$'' 
in Table~1 can be removed. 
In other words, 
the following question is naturally raised.

\begin{question}
%When $f$ is as in (\ref{eqn:}), 
Do the following equalities hold? 
\begin{enumerate}
\item $m_0(f)=1/b$ for {\it all} $f$ satisfying (B) or (D);
\item $m_0(f)=1/a$ for {\it all} $f$ satisfying (C) with $a>0$.
\end{enumerate}
\end{question}

As mentioned above, some specific cases showing the above equalities 
are known but they are very special. 
It seems to be difficult to generally show the equalities
from our method in this paper.
%%%
Indeed, 
after some kind of approximation, 
we apply a van der Corput-type lemma (Lemma~4.5, below). 
Although this process is available for general smooth functions,  
this approximation is an obstacle to see the
situation of behavior of local zeta functions
near the line ${\rm Re}(s)=-1/a$ or $-1/b$.

%This paper is organized as follows.
%In Section 2,
%we give auxiliary lemmas which are needed in the proof of Theorem \ref{thm:}.
%We prove Theorem \ref{thm:} in Section 3.

%We call a function $f$ is a $C_0^{\infty}$ function when
%$f$ is a $C^{\infty}$ function whose support is compact.

%%%%%%%%%%%%%%%%%%%%%%%%%%%%%%%%%%%%%%%%%%%%
%%%%%%%%%%%%%%%%%%%%%%%%%%%%%%%%%%%%%%%%%%%%

\section{Auxiliary lemmas}

%%%%%%%%%%%%%%%%%%%%%%%%%%%%%%%%%%%%%%%%%%%%%

\subsection{Meromorphy of one-dimensional model}

The following lemma is essentially known
(see \cite{GeS64}, \cite{BeG69}, \cite{AGV88}, etc.).
Since we will use
not only the result but also an idea of its proof
in the later computation, 
we give a complete proof here.

%%%%%%%%%%%%%%%%%%%%%%%%%%%%%%%%%
\begin{lemma}\label{lem:2.2}
Let $A,B$ be integers with $A>0$, $B\geq 0$ and
let $\psi(u;s)$ be a complex-valued function defined on $[0,r] \times \C$, 
where $r>0$.
We assume that 
\begin{enumerate}
\item[(a)]
$\psi(\cdot;s)$ is smooth on $[0,r]$ 
for all $s\in \C$;
\item[(b)] 
$\dfrac{\d^{\a} \psi}{\d u^{\a}}(u;\cdot)$ is
an entire function on $\C$ for all $u\in [0,r]$ and $\a\in \Z_+$.
\end{enumerate}
Let
\begin{equation}\label{eqn:4.1}
L(s):=\int_{0}^{r}u^{A s+B}\psi(u;s)du.
\end{equation}
Then the following hold.
\begin{enumerate}
\item
The integral
$L(s)$ 
becomes a holomorphic function on the half-plane ${\rm Re}(s)>-(B+1)/A$.
\item
The integral
$L(s)$ can be analytically continued as a meromorphic function
to the whole complex plane. 
Moreover, its poles are simple and they are contained in the set 
$\{-(B+j)/A: j\in \N\}$.
\end{enumerate}
\end{lemma}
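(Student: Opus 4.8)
The plan is to prove both parts by a single induction on $B$, peeling off Taylor terms of $\psi$ in the variable $u$, exactly as one does for the classical Gel'fand–Shilov computation of $\int_0^r u^{\lambda}\,du$-type integrals with a smooth amplitude. First I would observe that (i) is immediate: for ${\rm Re}(s)>-(B+1)/A$ we have ${\rm Re}(As+B)>-1$, so $u^{As+B}$ is locally integrable on $[0,r]$, and since $\psi(\cdot;s)$ is smooth hence bounded on $[0,r]$, the integral $L(s)$ converges absolutely; holomorphy in $s$ on this half-plane follows from assumption (b) together with Morera's theorem (or differentiation under the integral sign), because for each fixed $u$ the integrand $u^{As+B}\psi(u;s)$ is entire in $s$ and the convergence is locally uniform on the half-plane.

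For (ii), the key step is Taylor expansion with remainder of $\psi$ around $u=0$. Fix $N\in\N$ and write
\begin{equation*}
\psi(u;s)=\sum_{k=0}^{N-1}\frac{1}{k!}\frac{\d^{k}\psi}{\d u^{k}}(0;s)\,u^{k}+u^{N}\psi_{N}(u;s),
\end{equation*}
where $\psi_{N}(u;s)=\frac{1}{(N-1)!}\int_0^1(1-t)^{N-1}\frac{\d^{N}\psi}{\d u^{N}}(tu;s)\,dt$. By assumptions (a) and (b), $\psi_{N}$ again satisfies the hypotheses of the lemma (it is smooth in $u$ on $[0,r]$ and entire in $s$ for each $u$, by differentiating under the $t$-integral and using (b)), and the coefficient $c_k(s):=\frac{1}{k!}\frac{\d^{k}\psi}{\d u^{k}}(0;s)$ is entire in $s$ by (b). Substituting into $L(s)$ gives, for ${\rm Re}(s)$ large,
\begin{equation*}
L(s)=\sum_{k=0}^{N-1}c_k(s)\int_0^r u^{As+B+k}\,du+\int_0^r u^{A s+B+N}\psi_{N}(u;s)\,du
=\sum_{k=0}^{N-1}\frac{c_k(s)\,r^{As+B+k+1}}{As+B+k+1}+L_N(s),
\end{equation*}
where $L_N(s):=\int_0^r u^{As+B+N}\psi_{N}(u;s)\,du$. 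By part (i) applied to $\psi_N$ with $B$ replaced by $B+N$, the term $L_N(s)$ is holomorphic on ${\rm Re}(s)>-(B+N+1)/A$. Each term in the finite sum is meromorphic on $\C$ with a single simple pole at $s=-(B+k+1)/A$ (the zero $r^{As+B+k+1}=\mathrm{const}$ in the numerator does not vanish, and $c_k(s)$ is entire, so no cancellation can create a worse singularity; if $c_k(s)$ happens to vanish there the pole is removable, which only helps). This exhibits $L(s)$ as a meromorphic function on ${\rm Re}(s)>-(B+N+1)/A$ with simple poles confined to $\{-(B+k+1)/A:0\le k\le N-1\}=\{-(B+j)/A:1\le j\le N\}\subset\{-(B+j)/A:j\in\N\}$. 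Since $N$ is arbitrary and the extensions agree on overlapping half-planes by uniqueness of analytic continuation, we obtain a meromorphic extension to all of $\C$ with the asserted pole set.

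The only point requiring care — and the step I expect to be the mild obstacle — is verifying that $\psi_N$ genuinely inherits hypotheses (a) and (b), i.e. that one may legitimately differentiate the remainder integral $\int_0^1(1-t)^{N-1}\frac{\d^N\psi}{\d u^N}(tu;s)\,dt$ in $u$ under the integral sign (to get smoothness in $u$) and that it is entire in $s$ for each fixed $u$ (which needs that $s\mapsto\frac{\d^N\psi}{\d u^N}(tu;s)$ is entire, uniformly enough in $t\in[0,1]$ to pass the limit through the $t$-integral); both follow from (a), (b), dominated convergence and Morera, but should be spelled out. Everything else is the routine Gel'fand–Shilov bookkeeping. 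I would present (i) first, then the Taylor-expansion identity as the engine for (ii), and close with the analytic-continuation-uniqueness remark that patches the half-planes together.
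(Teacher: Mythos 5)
Your proof is correct and follows essentially the same route as the paper: part (i) by locally uniform absolute convergence plus holomorphy in $s$, and part (ii) by Taylor expansion of $\psi$ at $u=0$ with integral remainder, integrating the polynomial terms explicitly to produce the simple poles at $-(B+j)/A$, applying (i) to the remainder integral on an ever larger half-plane, and letting $N\to\infty$. The only cosmetic difference is your indexing of the Taylor expansion (remainder of order $N$ versus the paper's $N+1$), which does not affect the argument.
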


\begin{proof}
(i) \quad 
Since $L(s)$ locally uniformly converges 
on the half-plane ${\rm Re}(s)>-(B+1)/A$,
the assumption and the Lebesgue convergence theorem give that
the integral becomes a holomorphic function there.

\vspace{.5 em}

(ii) \quad
Let $N$ be an arbitrary natural number.
The Taylor formula implies
\begin{equation}\label{eqn:4.2}
\psi(u;s)=\sum_{\a=0}^{N}\frac{1}{\a !}\frac{\d^{\a} \psi}{\d u^{\a}}(0;s)u^{\a}+u^{N+1}R_N(u;s)
\end{equation}
with
\begin{equation*}
R_N(u;s)=
\frac{1}{N!}\int_{0}^{1}(1-t)^{N}
\frac{\d^{N+1} \psi}{\d u^{N+1}}(tu;s)dt.
\end{equation*}
Here $R_N(u;s)$ satisfies the following.    
\begin{itemize}
\item
$R_N(\cdot;s)$ is smooth on $[0,r]$ 
for all $s\in \C$.
\item
$R_N(u;\cdot)$ is
an entire function on $\C$ for all $u\in [0,r]$.
\end{itemize}.
Substituting (\ref{eqn:4.2}) into the integral (\ref{eqn:4.1}), 
we have 
\begin{equation}\label{eqn:4.3}
\begin{split}
L(s)=
\sum_{\a=0}^{N}
\frac{r^{As+B+\a+1}}{\a!(As+B+\a+1)}
\frac{\d^{\a} \psi}{\d u^{\a}}(0;s)+
\int_{0}^{r}u^{As+B+N+1}R_N(u;s)du
%+\int_{r}^{\infty}u^{As+B}\psi(u;s)du
\end{split}
\end{equation}
on ${\rm Re}(s)>-(B+1)/A$.
From (i), the integral in (\ref{eqn:4.3}) 
%converges locally uniformly on the half-plane ${\rm Re}(s)>-(B+N+1)/A$.
%The assumption and the Lebesgue's convergence theorem gives that
%the integrals 
becomes a holomorphic function on the half-plane ${\rm Re}(s)>-(B+N+2)/A$.
Therefore, 
$L(s)$ can be analytically continued as a meromorphic function
to the half-plane ${\rm Re}(s)>-(B+N+2)/A$.  
Moreover, all the poles of $L(s)$ are simple 
and they are contained in the set 
$\{-(B+j)/A: j=1,\ldots, N+1\}$.
Letting $N$ tend to infinity, we have the assertion.
\end{proof}

\begin{remark}
Let $\tilde{\psi}(u;s)$ be a complex-valued function defined on $\R \times \C$
satisfying that
%\begin{enumerate}
%\item
$\tilde{\psi}(\cdot;s)$ is
a $C_0^{\infty}$ function
on $\R$ for all $s\in \C$
%\item 
and
$\frac{\d^{\a} \tilde{\psi}}{\d u^{\a}}(u;\cdot)$ is
an entire function on $\C$ for all $u\in \R$ and $\a\in \Z_+$.
%\end{enumerate}
Then
the integral 
\begin{equation*}
\tilde{L}(s):=\int_{0}^{\infty}u^{A s+B}\tilde{\psi}(u;s)du
\end{equation*}
has the same meromorphy properties as those of $L(s)$
in Lemma~4.1.
Indeed, for $r>0$, 
$\psi(\cdot;s):=\tilde{\psi}(\cdot,s)|_{[0,r]}$
satisfies the assumptions (a), (b) of Lemma~4.1 
and 
the integral 
%\displaystyle 
$\int_r^{\infty}u^{As+B}\tilde{\psi}(u;s)du$
becomes an entire function.
\end{remark}

%%%%%%%%%%%%%%%%%%%%%%%%%%%%%%%%%%%%%%%%%%%%%%%%%%%%
%%%%%%%%%%%%%%%%%%%%%%%%%%%%%%%%%%%%%%%%%%%%%%%%%%%%
\subsection{Meromorphy of important integrals}

The following lemma will play a useful 
role in the proof of Theorem~3.3.

\begin{lemma}
Let $a,b\in \N$ with $a \leq b$ and 
let 
\begin{equation}\label{eqn:4.4}
D:=\{(u,v)\in \R_+^2: 
v^p < u \leq R, \,\, v \leq r\},
\end{equation}
where $p\in\N$ and $r, R >0$ with $r^p\leq R$.
We assume that
$\psi(u,v;s)$ is a complex-valued function 
defined on $D \times \C$ satisfying the following.
%%%
\begin{enumerate}
\item[(a)]
$\psi(\cdot; s)$ can be smoothly extended to  
$\overline{D}$ for all $s\in \C$;
\item[(b)]
%The derivatives
$\dfrac{\d^{\alpha+\beta}\psi}{\d u^{\alpha}\d v^{\beta}}(u,v;\cdot)$ 
is an entire function 
for all $(u,v)\in D$ and $(\alpha, \beta)\in \Z_+^2$.
\end{enumerate}
Let 
\begin{equation}\label{eqn:4.5}
H(s):=\int_{D}u^{as}v^{bs}\psi(u,v;s)dudv.
\end{equation}
Then the following hold.
\begin{enumerate}
\item
The integral $H(s)$ becomes a holomorphic function on the half-plane 
${\rm Re}(s)>-1/b$.
\item
The integral $H(s)$
can be analytically continued as a meromorphic function
to the whole complex plane
and its poles are contained in the set
\begin{equation}\label{eqn:4.6}
\left\{-\frac{j}{a},-\frac{k}{b},-\frac{p+l}{ap+b}:j,k,l\in \N\right\}.
\end{equation}
%$\{-j/a:j\in \N\}\cup \{-j/b:j\in \N\}
%\cup\{-(p+j)/(ap+b):j\in \N\}$.
\end{enumerate}
\end{lemma}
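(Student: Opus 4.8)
The plan is to reduce $H(s)$ to integrals of the one-dimensional type handled in Lemma~\ref{lem:2.2} by splitting the region $D$ along the curve $u=v^p$ and by iterating Taylor expansions, exactly in the spirit of the proof of Lemma~\ref{lem:2.2}. For part (i), since $(u,v)\in D$ forces $v\le r$ and $u\le R$, the integrand is bounded in modulus by a constant times $u^{a\,{\rm Re}(s)}v^{b\,{\rm Re}(s)}$; when ${\rm Re}(s)>-1/b\ge -1/a$ both $\int_0^R u^{a\,{\rm Re}(s)}\,du$ and $\int_0^r v^{b\,{\rm Re}(s)}\,dv$ converge, and local uniform convergence together with assumptions (a), (b) and the Lebesgue convergence theorem gives holomorphy on ${\rm Re}(s)>-1/b$.

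For part (ii), I would first perform the $u$-integration. Fix $v$ and write $D_v=\{u: v^p<u\le R\}$; Taylor-expand $\psi$ in $u$ about $u=0$ to order $N$ as in \eqref{eqn:4.2}. The polynomial part contributes, after integrating $u^{as+\alpha}$ over $[v^p,R]$, a sum of terms of the shape
\[
\frac{R^{as+\alpha+1}-v^{p(as+\alpha+1)}}{as+\alpha+1}\cdot\frac{1}{\alpha!}\frac{\d^{\alpha}\psi}{\d u^{\alpha}}(0,v;s),
\]
while the remainder term gives an integral $\int_{v^p}^{R}u^{as+N+1}R_N(u,v;s)\,du$ that is holomorphic for ${\rm Re}(s)>-(N+2)/a$. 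Multiplying by $v^{bs}$ and integrating over $v\in[0,r]$, the $R^{as+\alpha+1}$ pieces produce (after a further Taylor expansion in $v$, or directly via Lemma~\ref{lem:2.2} with $A=b$, $B=0$) poles in $\{-k/b:k\in\N\}$ together with the factor $1/(as+\alpha+1)$ contributing poles in $\{-j/a:j\in\N\}$; the $v^{p(as+\alpha+1)}$ pieces give integrals $\int_0^r v^{(ap+b)s+p(\alpha+1)}(\cdots)\,dv$, and Lemma~\ref{lem:2.2} with $A=ap+b$, $B=p(\alpha+1)$ then yields poles in $\left\{-\frac{p(\alpha+1)+j}{ap+b}:j\in\N\right\}\subset\left\{-\frac{p+l}{ap+b}:l\in\N\right\}$. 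The $1/(as+\alpha+1)$ factor could in principle create a double pole where a zero of $as+\alpha+1$ coincides with a pole of the $v$-integral, but such a coincidence forces $-j/a$ to lie in one of the other two progressions, which does not enlarge the pole set \eqref{eqn:4.6}; in any case the claim only asserts containment of the pole locations, not simplicity. Letting $N\to\infty$ covers the whole plane.

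The main obstacle is bookkeeping the regularity and entirety hypotheses through the two successive Taylor expansions: I must check that the coefficient functions $v\mapsto \frac{\d^{\alpha}\psi}{\d u^{\alpha}}(0,v;s)$ are smooth on $[0,r]$ and entire in $s$ (so that Lemma~\ref{lem:2.2} applies to the $v$-integral), and that the mixed remainder $R_N$ inherits both properties on $\overline{D}$ — this uses that $\psi$ extends smoothly to $\overline{D}$, so one-sided derivatives at $u=v^p$ and at the boundary are well defined. A secondary technical point is that the lower limit of the $u$-integral is the moving point $v^p$ rather than $0$, so one cannot quote Lemma~\ref{lem:2.2} verbatim for the $u$-integration; instead one integrates $u^{as+\alpha}$ explicitly over $[v^p,R]$ and only invokes Lemma~\ref{lem:2.2} (and Remark after it) for the resulting $v$-integrals, whose integrands are genuinely of the form $v^{As+B}\times(\text{admissible})$ on $[0,r]$.
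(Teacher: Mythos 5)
Your overall strategy (Taylor expansion, explicit integration of the monomial pieces over the wedge $v^p<u\le R$, and Lemma~4.1 for the resulting one-dimensional integrals) is the same as the paper's, and your treatment of the polynomial part and its pole bookkeeping matches the paper's computation of $H_{\alpha,\beta}(s)$. But there is a genuine gap in how you dispose of the remainder. Expanding $\psi$ in $u$ only, the remainder contributes
\begin{equation*}
\mathcal{R}_N(s)=\int_{D}u^{as+N+1}v^{bs}R_N(u,v;s)\,du\,dv ,
\end{equation*}
and this term carries the factor $v^{bs}$ with \emph{no} accompanying positive power of $v$: near $v=0$ with $u$ of order $1$ (such points lie in $D$), the integrand is comparable to $v^{b\,{\rm Re}(s)}$, so $\mathcal{R}_N(s)$ converges and is holomorphic only on ${\rm Re}(s)>-1/b$, no matter how large $N$ is. (Your observation that the inner $u$-integral is holomorphic for ${\rm Re}(s)>-(N+2)/a$ is true for each fixed $v>0$, but after multiplying by $v^{bs}$ and integrating in $v$ the half-plane does not grow with $N$; nor can you quote Lemma~4.1 for this $v$-integral, since its coefficient $\int_{v^p}^{R}u^{as+N+1}R_N\,du$ is not entire in $s$ at $v=0$.) Consequently your decomposition only exhibits $H$ as a meromorphic function on ${\rm Re}(s)>-1/b$, which is where it was already holomorphic, and letting $N\to\infty$ gains nothing: part (ii) is not reached. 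The missing idea is precisely the paper's \emph{two-variable} Taylor expansion (4.7)--(4.8): every remainder piece then has the form $u^{\alpha}v^{N+1}\tilde A_{\alpha}(v;s)$, $u^{N+1}v^{\beta}\tilde B_{\beta}(u;s)$ or $u^{N+1}v^{N+1}\tilde C(u,v;s)$, so either a high power of $v$ tames $v^{bs}$, or (for the $\tilde B_{\beta}$ pieces) one switches the order of integration, splits the region at $u=r^p$, and integrates explicitly in $v$, producing only the factor $1/(bs+\beta+1)$ together with integrals holomorphic on half-planes that grow with $N$.

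A secondary defect of your ordering of the expansions: Taylor-expanding in $u$ about $u=0$ at a fixed $v>0$ uses the values $\partial_u^{\alpha}\psi(0,v;s)$ and the segment $[0,u]\times\{v\}$, but the point $(0,v)$ with $v>0$ lies at distance about $v^{p}$ from $\overline{D}$, so these data are not furnished by hypotheses (a), (b); in the intended application $\psi=\tilde f(x,y)^s\varphi\chi_m$ (Lemma~6.1) the function $\tilde f$ blows up as $x\to 0$ for fixed $y>0$, so the coefficient functions you want to feed into Lemma~4.1 with $A=b$, $B=0$ need not exist at all. The paper keeps all Taylor coefficients at the origin $(0,0)\in\overline{D}$, which avoids this problem for the coefficient data; so if you repair the remainder issue, you should do so via the expansion based at the origin as in (4.7)--(4.8) rather than at $(0,v)$.
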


\begin{remark}
(1) \quad
In Lemma~4.3, when the set $D$ is replaced by the set 
\begin{equation*}
\tilde{D}=\{(u,v)\in \R^2: 
0 \leq u \leq v^{p}, 0\leq v \leq r\},
\end{equation*}
the same assertions (i), (ii) hold. 

(2) \quad 
When $j=k=l=1$ in (\ref{eqn:4.6}), 
the following inequalities hold: 
$$
-\frac{1}{a}\leq-\frac{p+1}{ap+b}\leq
-\frac{1}{b}.
$$
\end{remark}

\begin{proof}[Proof of Lemma~4.3]
(i)\quad
In a similar fashion to the proof of the Lemma~4.1 (i),
it can be easily shown that $H(s)$ becomes a holomorphic function
defined on the half-plane ${\rm Re}(s)>\max\{-1/a,-1/b\}=-1/b$.
\vspace{.5 em}

(ii)\quad
Let us consider meromorphic continuation of $H(s)$.
For simplicity,
we use the following symbol:
For $(\alpha,\beta)\in \Z_+^2$,
\begin{equation*}
\psi^{\langle \alpha,\beta \rangle}(u,v;s):=
\frac{\d^{\alpha+\beta}\psi}{\d u^{\alpha}\d v^{\beta}}(u,v;s).
\end{equation*}
Let $N$ be an arbitrary natural number.
By the Taylor formula,
\begin{equation}\label{eqn:4.7}
\begin{split}
\psi(u,v;s)
&=\sum_{(\a,\b)\in \{0,\ldots,N\}^2}
\frac{\psi^{\langle \a, \b \rangle}(0,0;s)}{\a ! \b !}
u^{\a}v^{\b}
+
\sum_{\a=0}^{N}u^{\a}v^{N+1}
\tilde{A}_{\alpha}^{(N)}(v;s)\\
&\quad\quad\quad
+\sum_{\b=0}^{N}u^{N+1}v^{\b}
\tilde{B}_{\beta}^{(N)}(u;s)
+u^{N+1}v^{N+1}\tilde{C}^{(N)}(u,v;s),
\end{split}
\end{equation}
where
\begin{equation}\label{eqn:4.8}
\begin{split}
&\tilde{A}_{\alpha}^{(N)}(v;s)
:=\frac{1}{\a ! N!}\int_{0}^{1}(1-t)^{N}
\psi^{\langle \a,N+1 \rangle}(0,tv;s)dt
 \mbox{\quad for } \a\in\{0,\ldots, N\}, \\
&\tilde{B}_{\beta}^{(N)}(u;s)
:=\frac{1}{\b ! N!}
\int_{0}^{1}(1-t)^{N}
\psi^{\langle N+1, \b \rangle}(tu,0;s)dt
\mbox{\quad for }  \b\in \{0,\ldots,N\},\\
&\tilde{C}^{(N)}(u,v;s)
:=\frac{1}{(N!)^2}\int_{0}^{1}\int_{0}^{1}
(1-t_1)^{N}(1-t_2)^{N}
\psi^{\langle N+1, N+1 \rangle}(t_1u,t_2v;s)
dt_1 dt_2.
\end{split}
\end{equation}
Note that
\begin{itemize}
\item $\tilde{A}_{\alpha}^{(N)}(\cdot;s)$, 
$\tilde{B}_{\beta}^{(N)}(\cdot;s)$, 
$\tilde{C}^{(N)}(\cdot;s)$ 
are smooth functions for each $s\in\C$.
\item $\tilde{A}_{\alpha}^{(N)}(v;\cdot)$, 
$\tilde{B}_{\beta}^{(N)}(u;\cdot)$, 
$\tilde{C}^{(N)}(u,v;\cdot)$ 
are entire functions for each $(u,v)\in D$.
\end{itemize}
Substituting (\ref{eqn:4.7}) into (\ref{eqn:4.5}), 
we have 
\begin{equation}\label{eqn:4.9}
\begin{split}
H(s)=&\sum_{(\a,\b)\in \{0,\ldots,N\}^2}
\frac{\psi^{\langle \a, \b \rangle}(0,0;s)}{\a ! \b !}H_{\a, \b}(s)\\
&\quad+
\sum_{\alpha=0}^N A_{\alpha}^{(N)}(v;s)+
\sum_{\beta=0}^N B_{\beta}^{(N)}(v;s)+
C^{(N)}(s)
\end{split}
\end{equation}
with
\begin{equation*}
\begin{split}
&H_{\a, \b}(s)=
\int_{D}u^{as+\a}v^{bs+\b}dudv 
\mbox{\quad for } (\a, \b)\in \{0,\ldots, N\}^2, \\
&A_{\alpha}^{(N)}(s)=
\int_{D}u^{as+\a}v^{bs+N+1}\tilde{A}_{\alpha}^{(N)}(v;s)dudv 
 \mbox{\quad for } \a\in\{0,\ldots, N\},\\
&B_{\beta}^{(N)}(s)=
\int_{D}u^{as+N+1}v^{bs+\b}\tilde{B}_{\beta}^{(N)}(u;s)dudv  
\mbox{\quad for }  \b\in \{0,\ldots,N\},\\
&C^{(N)}(s)=
\int_{D}u^{as+N+1}v^{bs+N+1}\tilde{C}^{(N)}(u,v;s)dudv.
\end{split}
\end{equation*}

%\begin{equation}
%\begin{split}
%H_1(s)&=\sum_{(\a,\b)\in \{0,\ldots,N\}^2}\frac{\psi^{\langle \a, \b \rangle}(0,0;s)}{\a ! \b !}\int_{D}u^{as+\a}v^{bs+\b}dudv\\
%H_2(s)&=\sum_{\a=0}^{N}\int_{D}u^{as+\a}v^{bs+N+1}A_N(v;s)dudv\\
%H_3(s)&=\sum_{\b=0}^{N}\int_{D}u^{as+N+1}v^{bs+\b}B_N(u;s)dudv\\
%H_4(s)&=\int_{D}u^{as+N+1}v^{bs+N+1}C_N(u,v;s)dudv.
%\end{split}
%\end{equation}

Now let us consider the meromorphy of the above 
integrals. 

\vspace{.5 em}
%%%%%%%%%%%%%%%%%%%%%%%%%%%%%%%%%%%%%

\underline{The integral $H_{\a,\b}(s)$.}\quad 

A simple computation gives that 
\begin{equation*}
\begin{split}
H_{\a,\b}(s)&=
\int_{0}^{r}v^{bs+\b}
\left(\int_{v^p}^{R}u^{as+\a}du\right)dv
\\
&=
\frac{1}{as+\alpha+1}
\left( 
\frac{R^{as+\alpha+1}r^{bs+\beta+1}}{bs+\beta+1}
-\frac{r^{(ap+b)s+\a p+\b+p+1}}
{(ap+b)s+\a p+\b +p+1}
\right),
\end{split}
\end{equation*}
which implies that 
every $H_{\a,\b}(s)$ becomes a meromorphic function 
on the whole complex plane
and its poles are contained in the set
\begin{equation}\label{eqn:4.10}
\left\{-\frac{j}{a},-\frac{k}{b},-\frac{p+l}{ap+b}:
j,k,l\in\N \right\}.
\end{equation}

\vspace{.5 em}
%%%%%%%%%%%%%%%%%%%%%%%%%%%%%%%%%%%%%%%%

\underline{The integral $A_{\a}^{(N)}(s)$.}\quad 

A simple computation gives that 
\begin{eqnarray}
&&A_{\alpha}^{(N)}(s)=
\int_0^r\left(\int_{v^p}^{R}
u^{as+\a}du
\right)
v^{bs+N+1}\tilde{A}_{\alpha}^{(N)}(v;s)dv \nonumber\\
&&
\begin{split}
&\quad\quad\quad
=\frac{1}{as+\alpha+1}
\left(
R^{as+\alpha+1}\int_{0}^{r}v^{bs+N+1}
\tilde{A}_{\alpha}^{(N)}(v;s)dv
\right. \\ %\label{eqn:4.11}\\
&\quad\quad\quad\quad\quad\quad \quad\quad
\left.
-\int_{0}^{r}v^{(ap+b)s+\alpha p+p+N+1}
\tilde{A}_{\alpha}^{(N)}(v;s)dv
\right). \label{eqn:4.11}
\end{split}
\end{eqnarray}
Lemma~4.1 (i) implies that
the first (resp. the second) integral in 
(\ref{eqn:4.11})
becomes a holomorphic function on the half-plane
${\rm Re}(s)>-(N+2)/b$ 
(resp. ${\rm Re}(s)>-(\alpha p+p+N+2)/(ap+b)$).
Thus, 
$A_{\a}^{(N)}(s)$ can be analytically continued 
as a meromorphic function to the half-plane 
${\rm Re}(s)>\max\{-(N+2)/b,-(\alpha p+p+N+2)/(ap+b)\}$ 
and its poles are contained in the set
\begin{equation}\label{eqn:4.12}
\left\{-\frac{j}{a}:j\in\N \right\}.
\end{equation}
(When $N$ is sufficiently large, 
the above maximum is $-(\alpha p+p+N+2)/(ap+b)$.)

\vspace{.5 em}
%%%%%%%%%%%%%%%%%%%%%%%%%%%%%%%%%%%%%%%%%%%

\underline{The integral $B_{\beta}^{(N)}(s)$.}\quad 

A simple computation gives that 
\begin{eqnarray}
&&B_{\beta}^{(N)}(s)=
\left(
\int_{0}^{r^p}\int_0^{u^{1/p}}+
\int_{r^p}^{R}\int_0^{r}
\right)
u^{as+N+1}v^{bs+\beta}
\tilde{B}_{\beta}^{(N)}(u;s)dvdu \nonumber\\
&&\quad\quad\quad
\begin{split}
&=\frac{1}{bs+\beta+1}
\left(
\int_{0}^{r^p}
u^{\frac{1}{p}\{(ap+b)s+pN+p+\beta+1\}}
\tilde{B}_{\beta}^{(N)}(u;s)du \right.
\\
&\quad\quad\quad\quad\quad\quad\quad\quad\quad\quad 
\left. +
r^{bs+\beta+1}\int_{r^p}^{R}
u^{as+N+1}
\tilde{B}_{\beta}^{(N)}(u;s)du
\right). \label{eqn:4.13}
\end{split}
\end{eqnarray}
Lemma~4.1 (i) implies that
the first integral in (\ref{eqn:4.13})
becomes a holomorphic function 
on the half-plane ${\rm Re}(s)>-(pN+2p+\beta+1)/(ap+b)$.
Moreover, it is easy to check that the second integral
is an entire function.  
Hence, $B_{\beta}^{(N)}(s)$ 
can be analytically continued 
as a meromorphic function to the half-plane 
${\rm Re}(s)>-(pN+2p+\beta+1)/(ap+b)$
and its poles are contained in the set
\begin{equation}\label{eqn:4.14}
\left\{-\frac{k}{b}:k\in\N \right\}.
\end{equation}

\vspace{.5 em}
%%%%%%%%%%%%%%%%%%%%%%%%%%%%%%%%%%%%%%%%%%%%%

\underline{The integral $C^{(N)}(s)$.}\quad 

It follows from the proof of (i) in this lemma that
the integral $C^{(N)}(s)$ converges
on the half-plane 
${\rm Re}(s)>\max\{-(N+2)/a,-(N+2)/b\}=-(N+2)/b$,
which implies that
$C^{(N)}(s)$ can be analytically continued as a holomorphic function 
there.
%to the half-plane ${\rm Re}(s)>\max\{-(N+1)/a,-(N+1)/b\}$.

\vspace{.5 em}

From the above, letting $N$ to infinity 
in (\ref{eqn:4.9}), 
we can see that $H(s)$ becomes a meromorphic function 
on the whole complex plane and that
the poles of $H$ is contained in the set (\ref{eqn:4.6})
from (\ref{eqn:4.10}), (\ref{eqn:4.12}), (\ref{eqn:4.14}).
\end{proof}

%\begin{lemma}
%Let $r,a$ be a positive number,
%$b\in \R$
%and $h$ a $C^{\infty}$ function defined on the interval $[0,r]$
%and $h$ a flat function at the origin.
%For any $s\in \C$,
%the integral
%\begin{equation}
%\tilde{H}(s):=
%\int_{0}^{r}u^{as+b}h(u)du
%\end{equation}
%converges.
%\end{lemma}
%
%\begin{proof}
%Since $h$ is flat at the origin,
%for any $N\in \N$,
%the function $\tilde{h}_N$ defined as the following
%is a $C^{\infty}$ function on $[0,r]$:
%$\tilde{h}_N(u):=h(u)/u^N$ if $u\neq 0$
%and $\tilde{h}_N(0):=0$.
%Then we have
%\begin{equation}
%\tilde{H}(s)=
%\int_{0}^{r}u^{as+b+N}\tilde{h}_N(u)du
%\end{equation}
%and the above integral converges on the half-plane
%${\rm Re}(s)>-(b+N)/a$.
%Taking $N$ sufficiently large,
%we have the assertion.
%\end{proof}

\subsection{A van der Corput-type lemma}

\begin{lemma}[\cite{Gre06}]
Let $f$ be a $C^{k}$ function on an interval $I$ in $\R$.
If 
%$\left|\frac{d^k f}{dx^k}(x)\right|>\eta$ 
$|f^{(k)}|>\eta$
on $I$, then
for $\sigma\in (-1/k,0)$ there is a positive constant 
${\mathcal C}(\sigma,k)$ depending only on $\sigma$ and $k$
such that
\begin{equation}\label{eqn:4.15}
\int_{I}|f(x)|^{\sigma}dx<
{\mathcal C}(\sigma,k)
\eta^{\sigma}|I|^{1+k\sigma},
\end{equation}
where $|I|$ is the length of $I$.
\end{lemma}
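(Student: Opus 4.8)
\emph{Proof proposal.}
The plan is to reduce the weighted integral to a sublevel-set measure estimate and then integrate out the weight. Write $\sigma=-\mu$ with $\mu\in(0,1/k)$, so that the claim becomes an upper bound for $\int_I|f(x)|^{-\mu}\,dx$. Since $|f|^{-\mu}\ge 0$, the distribution-function (layer-cake) identity together with the substitution $\lambda=t^{-\mu}$ yields
\begin{equation*}
\int_I|f(x)|^{-\mu}\,dx=\int_0^\infty\bigl|\{x\in I:|f(x)|^{-\mu}>\lambda\}\bigr|\,d\lambda=\mu\int_0^\infty t^{-\mu-1}\,m(t)\,dt,\qquad m(t):=\bigl|\{x\in I:|f(x)|<t\}\bigr|.
\end{equation*}
Hence everything rests on good control of the sublevel measure $m(t)$, to be used alongside the trivial bound $m(t)\le|I|$.

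The heart of the matter is the classical sublevel estimate: if $|f^{(k)}|>\eta$ on $I$ (with $\eta>0$), then $m(t)\le C_k\,(t/\eta)^{1/k}$ for every $t>0$, where $C_k$ depends only on $k$. I would prove this by induction on $k$. For $k=1$: since $f'$ is continuous and nowhere zero it has constant sign, so $f$ is strictly monotone with $|f(x)-f(x')|\ge\eta|x-x'|$; thus $f^{-1}$ is $\eta^{-1}$-Lipschitz on $f(I)$, and $m(t)=\bigl|f^{-1}\bigl((-t,t)\cap f(I)\bigr)\bigr|\le 2t/\eta$, so $C_1=2$ works. For the step from $k-1$ to $k$, suppose $|f^{(k)}|>\eta$ on $I$. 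Then $f^{(k)}$ is continuous and nowhere zero, hence of constant sign, and repeated use of Rolle's theorem shows $f^{(k-1)}$ has at most one zero, $f^{(k-2)}$ at most two, \dots, and $f'$ at most $k-1$ zeros; consequently $I$ minus the finite zero set of $f'$ splits into at most $k$ open subintervals $I_1,\dots,I_k$, on each of which $f$ is strictly monotone with $f'\ne 0$. Fix $\delta>0$ and put $F:=\{x\in I:|f'(x)|\le\delta\}$. Applying the inductive hypothesis to $f'$ (which satisfies $|(f')^{(k-1)}|>\eta$ on $I$) gives $|F|\le C_{k-1}(\delta/\eta)^{1/(k-1)}$. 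On each $I_j$, the substitution $y=f(x)$ and the bound $|(f^{-1})'(y)|<1/\delta$ valid where $|f'|>\delta$ give $\bigl|\{x\in I_j:|f(x)|<t,\ |f'(x)|>\delta\}\bigr|\le 2t/\delta$; summing over the at most $k$ intervals, $m(t)\le|F|+2kt/\delta\le C_{k-1}(\delta/\eta)^{1/(k-1)}+2kt/\delta$. Optimizing by choosing $\delta\asymp\eta^{1/k}\,t^{(k-1)/k}$ balances the two terms and makes each of order $(t/\eta)^{1/k}$, giving $m(t)\le C_k(t/\eta)^{1/k}$ with $C_k$ depending only on $k$ and $C_{k-1}$; this closes the induction.

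Finally I would assemble the two bounds on $m$. Let $t_0:=\eta\,(|I|/C_k)^k$, the value at which $C_k(t/\eta)^{1/k}$ equals $|I|$, split the $t$-integral at $t_0$, and use $m(t)\le C_k(t/\eta)^{1/k}$ for $t<t_0$ and $m(t)\le|I|$ for $t\ge t_0$:
\begin{equation*}
\int_I|f|^{-\mu}\,dx\;\le\;\mu C_k\,\eta^{-1/k}\int_0^{t_0}t^{1/k-\mu-1}\,dt\;+\;\mu\,|I|\int_{t_0}^{\infty}t^{-\mu-1}\,dt .
\end{equation*}
The first integral converges exactly because $\mu<1/k$ (this is where the hypothesis $\sigma>-1/k$ enters), and the second because $\mu>0$; evaluating both and substituting $t_0=\eta(|I|/C_k)^k$, each term equals a constant multiple of $\eta^{-\mu}|I|^{1-k\mu}$, the constants depending only on $\mu$ and $k$. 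Since $-\mu=\sigma$ and $1-k\mu=1+k\sigma$, this yields $\int_I|f|^\sigma\,dx\le\mathcal{C}_0(\sigma,k)\,\eta^\sigma|I|^{1+k\sigma}$, and setting $\mathcal{C}(\sigma,k):=\mathcal{C}_0(\sigma,k)+1$ produces the strict inequality in the statement. I expect the inductive step of the sublevel estimate to be the main obstacle: the crucial point is bounding the number of monotonicity intervals of $f$ by a quantity depending on $k$ alone, and this is the one place where the full strength of $|f^{(k)}|>\eta$ on all of $I$ is needed --- the constant sign of the top derivative, propagated through Rolle's theorem, is precisely what keeps that count independent of $\eta$, $t$, and $f$, hence what makes $C_k$ (and in the end $\mathcal{C}(\sigma,k)$) independent of the function, of $\eta$, and of $I$.
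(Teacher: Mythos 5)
Your proof is correct and follows essentially the same route as the paper's: the layer-cake identity reduces the integral to the sublevel measure $m(t)$, the $t$-integral is split at a crossover threshold, and the Christ-type sublevel bound $m(t)\leq C_k(t/\eta)^{1/k}$ is combined with the trivial bound $m(t)\leq |I|$, with $\mu<1/k$ giving convergence of the small-$t$ part. The only difference is that the paper imports the sublevel estimate from Lemma~3.3 of \cite{Chr85}, whereas you prove it yourself by the standard induction on $k$ (constant sign of $f^{(k)}$, Rolle-type count of monotonicity intervals, and optimization in $\delta$), which is a correct, self-contained substitute for that citation.
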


The above van der Corput-type lemma plays the most important role 
in our analysis. 
This lemma has been shown in \cite{Gre06}, but
we give a proof with more detailed explanation
for convenience of readers.

\begin{proof}
Let $F(\lambda)$ be the distribution function of $|f|$, that is,
$F(\lambda)$ is the length of the set $\{x\in I:|f(x)|\leq \lambda\}$.
By using $F(\lambda)$,
the integral in \eqref{eqn:4.15} is represented as
\begin{equation*}
\int_I|f(x)|^{\sigma} dx=-\sigma\int_0^{\infty}\lambda^{\sigma-1}F(\lambda)d\lambda.
\end{equation*}
Indeed, let
$E=\{(x,\lambda)\in I\times (0,\infty):|f(x)|\leq \lambda\}$ and 
$\mathbf{1}_E$ be the characteristic function of $E$,
then
\begin{equation}\label{eqn:4.16}
\begin{split}
\int_I|f(x)|^{\sigma} dx
&=-\sigma\int_I\left(\int_{|f(x)|}^{\infty}\lambda^{\sigma-1}d\lambda\right) dx\\
&=-\sigma\int_{I}\left(\int_{0}^{\infty}\lambda^{\sigma-1}
\mathbf{1}_{E}(x,\lambda)d\lambda\right) dx\\
&=-\sigma\int_{0}^{\infty}\lambda^{\sigma-1}\left(
\int_{I}\mathbf{1}_{E}(x,\lambda)dx \right)d\lambda\\
&=-\sigma\int_0^{\infty}\lambda^{\sigma-1}F(\lambda)d\lambda.
\end{split}
\end{equation}
We remark that the third equality in (\ref{eqn:4.16}) is given by
Tonelli's theorem for nonnegative measurable functions.
Here, we decompose the last integral in (\ref{eqn:4.16}) as follows.
\begin{equation*}
\begin{split}
\int_{0}^{\infty}\lambda^{\sigma-1}F(\lambda)d\lambda
&=\int_{0}^{\lambda_0}\lambda^{\sigma-1}F(\lambda)d\lambda+
\int_{\lambda_0}^{\infty}\lambda^{\sigma-1}F(\lambda)d\lambda\\
&=:J_1+J_2,
\end{split}
\end{equation*}
where $\lambda_0=\eta|I|^{k}$.
From Lemma 3.3 in \cite{Chr85},
we have
\begin{equation}\label{eqn:4.17}
F(\lambda)\leq C \frac{\lambda^{1/k}}{\eta^{1/k}},
\end{equation}
where $C$ is a positive constant depending only on $k$.
Easy computation with
the inequality (\ref{eqn:4.17}) gives 
$J_1\leq (Ck/(1+k\sigma))\eta^{\sigma}|I|^{1+k\sigma}$.
On the other hand,
we have
$J_2\leq (-1/\sigma)\eta^{\sigma}|I|^{1+k\sigma}$
since $F(\lambda)\leq |I|$.
The proof is completed.

\end{proof}

%%%%%%%%%%%%%%%%%%%%%%%%%%%%%%%%%%%%%%%%%%%

\section{Properties of $f$ in (\ref{eqn:1.2})}

Let $f$ be a smooth function expressed as in the case (C) in Lemma~3.1
on a small open neighborhood $U$ of the origin.

\subsection{The zero-variety $V(f)$}

In order to understand the analytic continuation
of local zeta functions, it is essentially important to 
understand geometric 
properties of the zero-variety:
\begin{equation}\label{eqn:5.1}
V(f)=\{(x,y)\in U:f(x,y)=0\}.
\end{equation}
It follows from an important factorization formula of Rychkov \cite{Ryc01} 
that $f$ can be expressed as follows.
%%%%%%%%%%%%%%%%%%%%%%%%%%%%%
\begin{lemma}
There exists a small open neighborhood $U$ of the origin 
such that 
\begin{equation*}\label{eqn:}
f(x,y)=\tilde{v}(x,y)y^b\prod_{j=1}^a (x-\phi_j(y)) \,\,\,
\mbox{ on $U$},
\end{equation*}
where $\phi_j$ are complex-valued continuous functions defined
near the origin satisfying that 
$\phi_j(y)=O(y^N)$ as $y\to 0$ for any $N\in\N$ and 
$\tilde{v}$ is a smooth function defined on $U$ with 
$\tilde{v}(0,0)>0$.
\end{lemma}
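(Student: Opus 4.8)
The plan is to reduce the factorization to the Weierstrass--Malgrange preparation theorem (which underlies the cited factorization formula of Rychkov) together with an elementary bound on the roots of a monic polynomial whose coefficients are flat. Throughout, $U$ may be shrunk finitely many times. First I would peel off the factor $y^{b}$: since each $h_{j}$ in \eqref{eqn:3.1} is flat at $0$, Taylor's formula with integral remainder of order $b$ shows that $h_{j}(y)/y^{b}$ extends to a smooth function $\hat h_{j}(y)$ which is again flat at $0$; hence $h(x,y)=y^{b}\hat h(x,y)$ with $\hat h(x,y)=\sum_{j=0}^{a-1}x^{j}\hat h_{j}(y)$ flat at the origin, so that
\[
f(x,y)=y^{b}\,G(x,y),\qquad G(x,y):=v(x,y)\,x^{a}+\hat h(x,y),
\]
near the origin. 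Note that $G(x,0)=v(x,0)x^{a}$ has a zero of order exactly $a$ at $x=0$ because $v(0,0)\neq 0$.

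Next I would apply the preparation theorem to the smooth function $G$: there exist a smooth unit $\tilde v$ with $\tilde v(0,0)\neq 0$ and smooth functions $c_{0},\dots,c_{a-1}$ with $c_{j}(0)=0$ such that $G=\tilde v\,P$ with $P(x,y)=x^{a}+\sum_{j=0}^{a-1}c_{j}(y)x^{j}$. Setting $y=0$ gives $\tilde v(x,0)x^{a}=v(x,0)x^{a}$, hence $\tilde v(0,0)=v(0,0)>0$. The step that needs care is showing that the $c_{j}$ are flat at $0$. For this, write $P=G/\tilde v=(v/\tilde v)(x,y)\,x^{a}+\check h(x,y)$, where $\check h:=\hat h/\tilde v$ is smooth (since $\tilde v$ is a unit) and flat at the origin (a flat function times a smooth one). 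Comparing the coefficients of $x^{m}$ for $m<a$ in $x^{a}+\sum_{j<a}c_{j}(y)x^{j}=(v/\tilde v)(x,y)x^{a}+\check h(x,y)$ yields $c_{m}(y)=\tfrac{1}{m!}\,\d_{x}^{m}\check h(0,y)$, which is a smooth function of $y$ all of whose $y$-derivatives vanish at $y=0$, i.e.\ $c_{m}$ is flat.

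Finally, factoring $P$ over $\C$ as $P(x,y)=\prod_{j=1}^{a}(x-\phi_{j}(y))$ with roots $\phi_{j}(y)\in\C$ chosen continuous in $y$ (continuity of the roots of a monic polynomial in the single real parameter $y$), I would bound the roots: putting $C(y):=\max_{0\le j<a}|c_{j}(y)|$, for $y$ small enough that $aC(y)<1$ every root $\phi$ of $P(\cdot,y)$ satisfies $|\phi|\le 1$ (otherwise $|\phi|^{a}\le\sum_{j<a}|c_{j}(y)|\,|\phi|^{j}<|\phi|^{a}$), whence $|\phi|^{a}\le aC(y)$ and $|\phi|\le(aC(y))^{1/a}$; since $C(y)=O(y^{N})$ for every $N$, so is each $\phi_{j}(y)$. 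Combining the three steps, $f(x,y)=y^{b}G(x,y)=\tilde v(x,y)\,y^{b}\prod_{j=1}^{a}(x-\phi_{j}(y))$, which is the asserted factorization.

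The computations in the first and third steps are routine; the one genuinely delicate point is the flatness of the Weierstrass coefficients in the middle step — this is precisely the extra information, beyond the bare preparation theorem, that Rychkov's factorization formula \cite{Ryc01} supplies.
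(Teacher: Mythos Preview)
Your argument is correct. The paper's own proof consists of a single sentence citing Proposition~2.1 of Rychkov \cite{Ryc01}, whereas you have effectively unpacked that citation in this special case: peel off $y^{b}$, apply the Malgrange preparation theorem to $G(x,y)=v(x,y)x^{a}+\hat h(x,y)$, verify that the Weierstrass coefficients $c_{j}$ inherit flatness from $\hat h$, and then bound the roots by an elementary Cauchy-type estimate. The only point worth tightening is the phrase ``comparing the coefficients of $x^{m}$'': since $v/\tilde v$ depends on $x$, what you are really doing is evaluating $\partial_{x}^{m}$ at $x=0$ and using that $\partial_{x}^{m}\bigl[(v/\tilde v)x^{a}\bigr]\big|_{x=0}=0$ for $m<a$ by Leibniz --- which is exactly what you need, and your conclusion $c_{m}(y)=\tfrac{1}{m!}\partial_{x}^{m}\check h(0,y)$ is correct. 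Also note that a \emph{continuous} labelling $\phi_{1}(y),\dots,\phi_{a}(y)$ of the (possibly repeated) roots exists because the parameter $y$ lives in an interval; this is the classical fact you invoke, and it is all the lemma requires. So your proof and the paper's are the same in spirit, with yours supplying the details the paper defers to \cite{Ryc01}.
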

%%%
\begin{proof}
This is an easy case of Proposition~2.1 in \cite{Ryc01}.
\end{proof}
%%%%%%

From the above lemma, 
the zero variety $V(f)$ is composed 
by at most $a+1$ components: 
\begin{equation*}
\begin{split}
&Z_0:=\{(x,y)\in U:y=0\}, \\
&Z_j:=\{(x,y)\in U:x=\phi_j(y)\} \,\,\text{ for $j=1,\ldots,a$.}
\end{split}
\end{equation*}
%%%%
When $\phi_j$ takes non-real values, 
$Z_j$ may be equal to $\{(0,0)\}$ but
they are realized in the complex space.
It is possible  
that $Z_j=Z_k$ for some $j,k$.

Roughly speaking, 
for the meromorphic extension of $Z_f(\varphi)$, 
the variety $Z_0$ gives {\it good} influence, 
while the varieties $Z_j$ 
with $\phi_j\not\equiv 0$ for $j=1,\ldots,a$ 
give {\it bad} one.
Therefore, in the analysis of $Z_f(\varphi)$, 
the integral region is divided into two parts: 
one is avoided from the bad varieties as large as possible, 
while the other is its complement. 
Of course, their shapes must be useful for the computation.
Actually, 
we use the two regions with a parameter $m\in\N$:
\begin{equation}\label{eqn:5.2}
\begin{split}
U_1^{(m)}&=\{(x,y)\in U: x>y^{m},\,\, 0\leq y \leq r_m\},\\
U_2^{(m)}&=\{(x,y)\in U: 0<x \leq y^{m}, \,\,  0\leq y \leq r_m\},
\end{split}
\end{equation}
where $r_m>0$ will be appropriately decided later 
in (\ref{eqn:5.5}).

\subsection{Two expressions of $f$}

In order to understand 
important properties of $f$,
we express $f$ by using the following two functions:
$F:U\to\R$ and $\tilde{f}:U\setminus\{x=0\}\to\R$
defined by 
%%%%
\begin{equation}\label{eqn:5.3}
\begin{split}
&F(x,y)=v(x,y)x^a+\sum_{j=0}^{a-1}x^j \tilde{h}_j(y),\\
&\tilde{f}(x,y)=v(x,y)+\sum_{j=1}^{a}\frac{\tilde{h}_{a-j}(y)}{x^j},
\end{split}
\end{equation}
where 
$\tilde{h}_j(y):=h_{j}(y)/y^b$ 
for $j=0,\ldots,a-1$. 
Note that each $\tilde{h}_j$ is flat at the origin. 
Then $f$ can be expressed as  
\begin{equation}\label{eqn:5.4}
\begin{split}
f(x,y)&=y^b F(x,y) \quad \mbox{ on $U$,}\\
f(x,y)&=x^a y^b\tilde{f}(x,y) \quad \mbox{ on $U\setminus\{x=0\}$.}
\end{split}
\end{equation}

In order to investigate $f$, we use 
$\tilde{f}$ on $U_1^{(m)}$ and
$F$ on $U_2^{(m)}$.

%%%%%%%%%%%%%%%%%%%%%%%%%%%%%%%%%%%%%%%%%%%%%%

\subsection{Properties of $\tilde{f}$}

Let $\phi$ be a function defined near the origin as 
$$\phi(y)=\max\{|\phi_j(y)|:j=1,\ldots,a\},$$
where $\phi_j$ is as in Lemma 5.1.
Since $\phi_j(y)=O(y^N)$ as $y\to 0$ for any $N\in\N$,
for each $m\in\N$ there exists a positive number $r_m$
such that 
\begin{equation}\label{eqn:5.5}
\phi(y)\leq \frac{1}{2}y^m \quad \text{ for $y\in[0,r_m]$}.
\end{equation}
We take the value of $r_m$ in (\ref{eqn:5.2}) such that
(\ref{eqn:5.5}) holds.

%%%%%%%%%%%%%%%%%%%%%%%%%%%%%%%%
\begin{lemma}
The function $\tilde{f}$ satisfies the following properties.
\begin{enumerate}
\item 
There exists a positive number $c$ independent of $m$ 
such that  $\tilde{f}(x,y)\geq c$ on $U_1^{(m)}$.
\item 
$\tilde{f}(x,y)$ can be smoothly extended  
to $\overline{U_1^{(m)}}$.
\end{enumerate}
\end{lemma}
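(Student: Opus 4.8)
The plan is to read off both statements directly from the defining formula
\[
\tilde{f}(x,y)=v(x,y)+\sum_{j=1}^{a}\frac{\tilde{h}_{a-j}(y)}{x^j},
\]
valid on $U\setminus\{x=0\}$, and in particular on $U_1^{(m)}=\{(x,y)\in U: x>y^m,\ 0\le y\le r_m\}$, using that each $\tilde{h}_j$ is flat at the origin while $x$ is bounded below by $y^m$ there. The key point is that flatness of $\tilde{h}_j$ gives decay faster than any power of $y$, which beats the negative power $x^{-j}\le y^{-mj}$ as long as we stay on $U_1^{(m)}$.

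For part (i): fix $m$ and write $\tilde f(x,y)=v(x,y)+\sum_{j=1}^a \tilde h_{a-j}(y)\,x^{-j}$. On $U_1^{(m)}$ we have $0<x^{-1}< y^{-m}$, so $|x^{-j}|<y^{-mj}$ for $y>0$, hence
\[
\left|\sum_{j=1}^a \tilde h_{a-j}(y)\,x^{-j}\right|
\le \sum_{j=1}^a |\tilde h_{a-j}(y)|\,y^{-mj}.
\]
Since each $\tilde h_i$ is flat at $0$, for any $M$ there is $C_M$ with $|\tilde h_i(y)|\le C_M y^M$ on $[0,r_m]$; choosing $M>ma$ makes $|\tilde h_{a-j}(y)|\,y^{-mj}\to 0$ as $y\to 0$, uniformly in $x$, for each $j\le a$. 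Thus the sum tends to $0$ as $y\to 0$ on $U_1^{(m)}$. Meanwhile $v$ is continuous with $v(0,0)=u(0,0)>0$, so after shrinking $U$ (equivalently shrinking $r_m$, which only shrinks $U_1^{(m)}$) we get $v\ge \tfrac12 u(0,0)$ and the correction term bounded by $\tfrac14 u(0,0)$ there, giving $\tilde f\ge c:=\tfrac14 u(0,0)$ on $U_1^{(m)}$. The constant $c$ does not depend on $m$ because it comes only from $u(0,0)$; the dependence on $m$ is absorbed entirely into how small $r_m$ must be, which is already allowed by the choice of $r_m$ in (\ref{eqn:5.5}). One should double-check that (\ref{eqn:5.5}) is indeed compatible with any further shrinking needed here — that is the one bookkeeping point to be careful about, but it is harmless since we may always decrease $r_m$.

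For part (ii): we must show $\tilde f$ extends smoothly across the boundary piece $\{x=y^m\}\cup\{y=0\}$ of $U_1^{(m)}$, in particular across $y=0$ where each $x^{-j}$ individually blows up along $x=y^m$. The mechanism is the same: a flat function divided by a monomial in $y$, evaluated where $x\gtrsim y^m$, is again flat-type and extends smoothly. Concretely, for each fixed $j$, on $\overline{U_1^{(m)}}$ the function $\tilde h_{a-j}(y)x^{-j}$ and all its partial derivatives (which are finite sums of terms $\tilde h_{a-j}^{(l)}(y)\,x^{-j-k}$, again with $x\ge y^m$) are bounded, because differentiating the flat factor keeps it flat and differentiating $x^{-j}$ only raises the power $k$, which is still controlled by $y^{-m(j+k)}$ against a flat $\tilde h_{a-j}^{(l)}$. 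Hence each such term extends continuously by $0$ to $\{y=0\}$ together with all derivatives, so $\tilde f$ is smooth on $\overline{U_1^{(m)}}$. The main obstacle is purely the careful verification that the formula for an arbitrary partial derivative $\partial_x^\alpha\partial_y^\beta$ of $\tilde h_{a-j}(y)x^{-j}$ is still of the form (flat in $y$) $\times$ (negative power of $x$ with the power bounded in terms of $\alpha,\beta,j$), so that the estimate $x\ge y^m$ still wins; this is a routine induction, so I would state it and carry out only the base case and the inductive step schematically.
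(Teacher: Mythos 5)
Your proof of part (ii) is essentially the paper's own argument: they too reduce to the terms $\psi_j(x,y)=\tilde h_{a-j}(y)/x^j$, compute $\frac{\partial^{\alpha+\beta}\psi_j}{\partial x^{\alpha}\partial y^{\beta}}=(-1)^{\alpha}\frac{(j+\alpha-1)!}{(j-1)!}\,\tilde h_{a-j}^{(\beta)}(y)\,x^{-j-\alpha}$ (a single term, so the induction you defer is not even needed), and beat $x^{-j-\alpha}\le y^{-m(j+\alpha)}$ by the flatness of $\tilde h_{a-j}^{(\beta)}$, noting that $\overline{U_1^{(m)}}$ meets $\{x=0\}$ only at the origin, which is the sole problematic boundary point. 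For part (i) you take a genuinely different route: the paper invokes Rychkov's factorization (Lemma 5.1), writes $\tilde f(x,y)=\tilde v(x,y)\prod_{j=1}^{a}\bigl(1-\phi_j(y)/x\bigr)$ on $U_1^{(m)}$, and uses the defining property (5.5) of $r_m$ to get $|\phi_j(y)/x|\le 1/2$, hence the lower bound $\tilde v(0,0)/2^{a}$ with the already-fixed $r_m$ and no further shrinking; you instead estimate the raw expression (5.3) directly, playing the flatness of the $\tilde h_j$ against $x^{-j}\le y^{-mj}$. Your version is more elementary (no factorization lemma needed) at the cost of shrinking $r_m$ beyond what (5.5) requires — which, as you note, is harmless, since everything downstream of Lemma 5.2 only uses that $r_m>0$ satisfies (5.5), and your constant $c=\frac14 u(0,0)$ is still independent of $m$. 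One small imprecision: the bound $v\ge\frac12 u(0,0)$ on $U_1^{(m)}$ is not obtained by shrinking $r_m$ (that only caps $y$, not $x$); you must shrink $U$ itself in the $x$-direction, which is permitted in this setting — and the paper's own bound $\tilde f\ge\tilde v(0,0)/2^{a}$ tacitly relies on the same kind of lower bound for $\tilde v$ over $U_1^{(m)}$.
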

%%%%%%%%%%%%%%%%%%%%%%%%%%%%%%%%%%%%%

\begin{proof}%[Proof of Lemma \ref{lem:}]

(i)\quad 
From Lemma~5.1, $\tilde{f}$ takes the following 
form on $U_1^{(m)}$:
\begin{equation}\label{eqn:5.6}
\tilde{f}(x,y)=
\tilde{v}(x,y)\prod_{j=1}^a 
\left(
1-\frac{\phi_j(y)}{x}
\right).
\end{equation}
%%%
From (\ref{eqn:5.5}), we easily see that
\begin{equation*}
\left|\frac{\phi_j(y)}{x}\right|\leq
\frac{\phi(y)}{x}\leq 
\frac{1}{2}\frac{y^m}{x}\leq
\frac{1}{2}
\quad \mbox{for $(x,y)\in U_1^{(m)}$},
\end{equation*}
which implies that
\begin{equation*}
\begin{split}
\tilde{f}(x,y) \geq \frac{\tilde{v}(0,0)}{2^a}
\quad \mbox{for $(x,y)\in U_1^{(m)}$}.
\end{split}
\end{equation*}

(ii)\quad
Since $\tilde{f}$ is a smooth function
on $U\setminus\{x=0\}$, 
it suffices to show that
all partial derivatives of 
$\tilde{f}$ can be continuously extended to 
the origin. 
Moreover, from the second equation in (\ref{eqn:5.3}),
it suffices to show that all partial derivatives of 
$\psi_j(x,y):=\tilde{h}_{a-j}(y)/x^j$ can be continuously
extended to the origin.

Let $(\alpha,\beta)\in\Z_+^2$ be arbitrarily given.
An easy computation gives that
\begin{equation*}
\frac{\d ^{\a+\b} \psi_j}{\d x^{\a} \d y^{\b}}
=(-1)^{\alpha} \frac{(j+\alpha-1)!}{(j-1)!} 
\frac{\tilde{h}_{a-j}^{(\beta)}(y)}{x^{j+\alpha}}
\end{equation*}
where $\tilde{h}_{a-j}^{(\beta)}$ 
is the $\b$-th derivative of $\tilde{h}_{a-j}$.
Since $\tilde{h}_{a-j}^{(\beta)}$ is flat at the origin,
there exists $r_{m,\alpha,\beta}>0$ such that
$
|\tilde{h}_{a-j}^{(\beta)}(y)|\leq 
\frac{(j-1)!}{(j+\alpha-1)!}y^{m(j+\alpha+1)}$
for $y\in [0,r_{m,\alpha,\beta}]$ and $j=1,\ldots,a$.
Considering the shape of the set $U_1^{(m)}$, 
we have
$$
\left|
\frac{\d^{\a+\b} \psi_j}{\d x^{\a} \d y^{\b}}\right|
\leq x \quad 
\mbox{ for 
$U_1^{(m)}\cap \{0<y\leq r_{m,\alpha,\beta}\}$ 
and $j=1,\ldots,a$.}
$$
Therefore, 
\begin{equation*}
\lim_{U_1^{(m)}\ni (x,y)\to (0,0)}
\frac{\partial^{\a+\b}\psi_j}{\d x^{\a} \d y^{\b}}(x,y)=0
\quad
\mbox{ for $j=1,\ldots,a$}.
\end{equation*}
In particular,  
$\frac{\d ^{\a+\b}\psi_j}{\d x^{\a} \d y^{\b}}$ 
can be continuous up to
the origin for $j=1,\ldots, a$.
\end{proof}

%%%%%%%%%%%%%%%%%%%%%%%%%%%%%%%%%%%%

\subsection{Properties of $F$}

When a van der Corput-type lemma in Lemma~4.5 is applied
in the next section, 
the following lemma is important.

\begin{lemma}
There exist $R>0$ and $\mu>0$ such that 
$$
\frac{\partial^a}{\partial x^a}F(x,y)
\geq \mu 
\quad \mbox{ on $[-R,R]^2$.}
$$
\end{lemma}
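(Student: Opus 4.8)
The plan is to compute $\frac{\partial^a}{\partial x^a}F$ explicitly from the definition \eqref{eqn:5.3} and to isolate the dominant term. Recall
$$
F(x,y)=v(x,y)x^a+\sum_{j=0}^{a-1}x^j\tilde{h}_j(y).
$$
Differentiating $a$ times in $x$, the sum $\sum_{j=0}^{a-1}x^j\tilde{h}_j(y)$ is a polynomial in $x$ of degree $a-1$, so it is annihilated by $\partial_x^a$ entirely. Hence
$$
\frac{\partial^a}{\partial x^a}F(x,y)=\frac{\partial^a}{\partial x^a}\bigl(v(x,y)x^a\bigr).
$$
By the Leibniz rule this equals $\sum_{k=0}^{a}\binom{a}{k}\frac{\partial^k v}{\partial x^k}(x,y)\cdot\frac{d^{a-k}}{dx^{a-k}}(x^a)$; the term $k=0$ contributes $a!\,v(x,y)$, and every term with $k\ge 1$ carries a positive power $x^{k}$ (since $\frac{d^{a-k}}{dx^{a-k}}x^a=\frac{a!}{k!}x^k$) times a bounded derivative of $v$. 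Thus
$$
\frac{\partial^a}{\partial x^a}F(x,y)=a!\,v(x,y)+x\cdot\rho(x,y),
$$
where $\rho$ is a smooth function on a fixed neighborhood of the origin (it is a polynomial in $x$ with coefficients among the $\frac{\partial^k v}{\partial x^k}$), hence bounded on a small closed box.

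Now I would use the hypothesis $v(0,0)=u(0,0)>0$ together with continuity of $v$. Choose $R_0>0$ so small that $v(x,y)\ge \tfrac12 v(0,0)$ on $[-R_0,R_0]^2$ and so that $|\rho(x,y)|\le M$ there for some constant $M$. Then on $[-R,R]^2$ with $R\le R_0$ we have
$$
\frac{\partial^a}{\partial x^a}F(x,y)\ge a!\,\tfrac12 v(0,0)-R\,M.
$$
Shrinking $R$ further so that $R\,M\le \tfrac14 a!\,v(0,0)$, we obtain $\frac{\partial^a}{\partial x^a}F(x,y)\ge \tfrac14 a!\,v(0,0)=:\mu>0$ on $[-R,R]^2$, which is the claim.

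There is no real obstacle here; the only point requiring a little care is the bookkeeping that $\partial_x^a$ kills the flat-coefficient polynomial part and that every "error" term from the Leibniz expansion of $\partial_x^a(vx^a)$ retains at least one factor of $x$, so that it can be absorbed by shrinking $R$. One should also make sure $R$ is chosen at most as small as the neighborhood $U$ on which the expression \eqref{eqn:5.3} for $F$ is valid, so that all the functions involved are genuinely smooth on $[-R,R]^2$.
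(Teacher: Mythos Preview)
Your argument is correct and is essentially the paper's own proof, only with more detail: the paper simply notes that a direct computation gives $\frac{\partial^a}{\partial x^a}F(0,0)=a!\,v(0,0)>0$ and then invokes continuity, whereas you spell out via the Leibniz rule why this value arises and make the continuity/shrinking step explicit.
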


\begin{proof}
A direct computation gives 
$$
\frac{\partial^a}{\partial x^a}F(0,0)=
a! v(0,0) \,\,(>0),
$$
which implies the lemma.
\end{proof}

%%%%%%%%%%%%%%%%%%%%%%%%%%%%%%%%%%%%%%%%%%%%%%
\section{Proof of Theorem~3.3}

Let $f$ be expressed as in the case (C) 
in Lemma~3.1 on a small open neighborhood $U$
of the origin. 
Let $U_j^{(m)}$ ($j=1,2$) be as in (\ref{eqn:5.2}) and 
let $r_m$ be a positive constant determined by (\ref{eqn:5.5}). 
Let $a>0$.

%Without loss of generality, we assume that
%$U(0,0)>0$ in the proof.
%%%%%%%%%%%%%%%%%%%%%%%%%%%%%%%%%%%%%%%%%%%%%%%%%%
\subsection{A decomposition of $Z_f(\varphi)(s)$}

Using the orthant decomposition, we have
\begin{equation}\label{eqn:6.1}
Z_f(\varphi)(s)=
\sum_{\theta\in\{1,-1\}^2}
\tilde{Z}_{f_\theta}(\varphi_{\theta})(s),
\end{equation}
where
\begin{equation}\label{eqn:6.2}
\tilde{Z}_f(\varphi)(s)=\int_{\R_+^2}\left|f(x,y)\right|^s \varphi(x,y)dxdy,\\
\end{equation}
$f_{\theta}(x,y)=f(\theta_1 x, \theta_2 y)$
and
$\varphi_{\theta}(x,y)=\varphi(\theta_1 x, \theta_2 y)$
with $\theta=(\theta_1,\theta_2)$.
%%%
In order to prove the theorem,
it suffices to consider the integral
$\tilde{Z}_f(\varphi)(s)$.
%\begin{equation}
%\tilde{Z}_f(\varphi)(s)=\int_{\R_+^2}\left|x^a y^b
%% U(x,y)
%+ \sum_{j=0}^{b-1}y^{j}g_j(x)\right|^s \varphi(x,y)dxdy.
%\end{equation}

Now, let us decompose $\tilde{Z}_f(\varphi)(s)$ as
\begin{equation}\label{eqn:6.3}
\tilde{Z}_f(\varphi)(s)=
I_1^{(m)}(s)+I_2^{(m)}(s)+J^{(m)}(s)
\end{equation}
with
\begin{equation}\label{eqn:6.4}
\begin{split}
I_j^{(m)}(s)
&=\int_{U_j^{(m)}}
\left|f(x,y)\right|^s 
\varphi(x,y)\chi_m(y)dxdy \quad 
\mbox{for $j=1,2$},\\
J^{(m)}(s)
&=\int_{\R_+^2}\left|f(x,y)\right|^s 
\varphi(x,y)(1-\chi_m(y))dxdy,\\
\end{split}
\end{equation}
where
$\chi_m:\R\to[0,1]$ is a cut-off function satisfying that
$\chi_m(y)= 1$ if $|y|\leq r_m/2$ and 
$\chi_m(y)= 0$ if $|y|\geq r_m$.

The following lemma will play a useful role 
in the analysis of the integral $I_1^{(m)}(s)$. 
%%%%
\begin{lemma}
Let $\Psi:U_1^{(m)}\times \C\to\C$ be defined by 
\begin{equation*}
\Psi(x,y;s)=
\tilde{f}(x,y)^s\varphi(x,y)\chi_m(y), 
\end{equation*}
where $\tilde{f}$ is as in (\ref{eqn:5.3}).
%%%%
Then we have
\begin{enumerate}
\item $\Psi(\cdot;s)$ can be smoothly extended to 
$\overline{U_1^{(m)}}$ for all $s\in\C$.
\item 
$\dfrac{\d^{\alpha+\beta}\Psi}{\d x^{\alpha}\d y^{\beta}}(x,y;\cdot)$ 
is an entire function 
for all $(x,y)\in U_1^{(m)}$ and $(\alpha, \beta)\in \Z_+^2$.
\end{enumerate}
\end{lemma}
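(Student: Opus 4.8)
The plan is to reduce both assertions to statements about the two building blocks $\tilde f$ and $\varphi\chi_m$, using the fact that $\Psi$ is a product of three factors and that smoothness and entireness are each preserved under products and compositions. First I would observe that by Lemma~5.3~(i), $\tilde f(x,y)\geq c>0$ on $U_1^{(m)}$, and by Lemma~5.3~(ii) the function $\tilde f$ admits a smooth extension to $\overline{U_1^{(m)}}$; since this extension is continuous and $U_1^{(m)}$ has compact closure, the extension is bounded below by $c/2$ (say) after shrinking $r_m$ if necessary, hence it stays in the open half-line $(0,\infty)$ on all of $\overline{U_1^{(m)}}$. Therefore, for each fixed $s\in\C$, the map $(x,y)\mapsto \tilde f(x,y)^s=\exp\bigl(s\log \tilde f(x,y)\bigr)$ is well defined (using the principal branch of the logarithm on $(0,\infty)$) and is a smooth function on $\overline{U_1^{(m)}}$, being the composition of the smooth positive function $\tilde f$ with the smooth map $t\mapsto t^s$ on $(0,\infty)$. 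Multiplying by $\varphi(x,y)\chi_m(y)$, which is smooth on all of $\R^2$ and hence on $\overline{U_1^{(m)}}$, gives assertion~(i).

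For assertion~(ii), fix $(x,y)\in U_1^{(m)}$ and $(\alpha,\beta)\in\Z_+^2$. I would compute $\partial_x^\alpha\partial_y^\beta\Psi$ by the Leibniz and Fa\`a di Bruno formulas: every term in the resulting (finite) expression is a product of

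\begin{itemize}
\item a constant,
\item derivatives $\partial_x^{p}\partial_y^{q}\tilde f(x,y)$ with $p+q\leq\alpha+\beta$, which are constants independent of $s$,
\item a factor of the form $s(s-1)\cdots(s-\ell+1)\,\tilde f(x,y)^{s-\ell}$ for some $\ell\in\Z_+$ with $\ell\leq\alpha+\beta$, coming from differentiating $\tilde f^s$, and
\item derivatives of $\varphi(x,y)\chi_m(y)$, again constants independent of $s$.
\end{itemize}

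Since $\tilde f(x,y)>0$ is a fixed positive real number, the map $s\mapsto \tilde f(x,y)^{s-\ell}=e^{(s-\ell)\log\tilde f(x,y)}$ is entire, and $s\mapsto s(s-1)\cdots(s-\ell+1)$ is a polynomial, hence entire; a finite sum of finite products of entire functions of $s$ is entire. This gives assertion~(ii).

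The only delicate point, and the one I would write out most carefully, is the passage from ``$\tilde f$ extends smoothly to $\overline{U_1^{(m)}}$ and is positive on $U_1^{(m)}$'' to ``the extension is bounded away from $0$ on the whole compact set $\overline{U_1^{(m)}}$''; this is where Lemma~5.3~(i) is genuinely used, since the lower bound $c$ there is uniform (indeed independent of $m$) and in particular survives on the closure by continuity. Everything else is a routine application of the chain and product rules, so I would state it as such rather than grinding through the combinatorics of Fa\`a di Bruno. No separate argument is needed for the behaviour near $\{x=0\}$, because that part of the boundary is already absorbed into the smooth extension provided by Lemma~5.3~(ii).
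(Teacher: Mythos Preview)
Your argument is correct and is essentially the same as the paper's: reduce to $\tilde f(x,y)^s$, use the positivity and smooth-extension properties of $\tilde f$ (this is Lemma~5.2 in the paper, not Lemma~5.3 as you cite), and observe that all partial derivatives of $\tilde f^s$ are finite sums of terms $s(s-1)\cdots(s-\ell+1)\tilde f^{s-\ell}$ times polynomials in the derivatives of $\tilde f$. One cosmetic point: the ``shrinking $r_m$'' step is unnecessary, since $\tilde f\geq c$ on $U_1^{(m)}$ together with continuity of the extension already gives $\tilde f\geq c$ on the closure.
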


\begin{proof}
Since $\varphi(x,y)\chi_m(y)$ 
does not give any essential influence on the properties (i), (ii), 
it suffices to consider the function 
$\tilde{f}(x,y)^s$. 
On the domain 
$U_1^{(m)}\setminus \{(x,y):\tilde{f}(x,y)=0\}$, 
every partial derivative of $\tilde{f}(x,y)^s$
with respect to $x,y$ can be expressed as the sum of
$s(s-1)\cdots (s-k+1) \tilde{f}(x,y)^{s-k}$ for $k\in\N$
multiplied by polynomials of the partial derivatives
of $\tilde{f}(x,y)$ with respect to $x,y$. 
Applying Lemma~5.2 to this expression, 
we can see that 
$\tilde{f}(x,y)^s$ has the properties in (i), (ii). 
\end{proof}
%%%%%%%%%%%%%%%%%%%%%%%%%%%%%%%%%%%%%

\subsection{Meromorphic continuation of associated integrals}

In order to prove Theorem~3.3,
it suffices to show the following.

\begin{lemma}
Let $m\in\N$.
If the support of $\varphi$ is contained in $[-R,R]^2$ 
where $R>0$ is as in Lemma 5.3,
then the following hold:
\begin{enumerate}
%%%
\item
$I_1^{(m)}(s)$ 
can be analytically continued as a meromorphic function
to the whole complex plane. 
Moreover,
its poles are contained in the set 
\begin{equation*}
\left\{
-\frac{j}{a},-\frac{k}{b},-\frac{m+l}{am+b}:j,k,l\in \N
\right\}.
\end{equation*}
%%%
\item
$I_2^{(m)}(s)$ can be holomorphically continued as a holomorphic function
to the half-plane ${\rm Re}(s)>-(m+1)/(am+b)\, (\geq-1/a) $.
%%%
\item
$J^{(m)}(s)$ can be holomorphically continued as a holomorphic function
to the half-plane
${\rm Re}(s)>-1/a$.
\end{enumerate}
\end{lemma}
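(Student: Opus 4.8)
The plan is to prove the three assertions separately, using the decomposition of the integration region and the auxiliary lemmas of Section 4. For part (i), on $U_1^{(m)}$ we use the factorized expression $f(x,y)=x^a y^b\tilde f(x,y)$ from (\ref{eqn:5.4}), valid since $x>0$ there, so that $|f(x,y)|^s = x^{as} y^{bs}\tilde f(x,y)^s$ on $U_1^{(m)}$. Hence
\begin{equation*}
I_1^{(m)}(s)=\int_{U_1^{(m)}}x^{as}y^{bs}\,\Psi(x,y;s)\,dx\,dy,
\end{equation*}
where $\Psi$ is the function of Lemma~6.1. After rescaling (replacing $x$ by $u$ and $y$ by $v$) the region $U_1^{(m)}=\{x>y^m,\ 0\le y\le r_m\}$ is exactly of the shape $D$ in (\ref{eqn:4.4}) with $p=m$, $R$ the half-width of the support and $r=r_m$ (shrinking $U$ if necessary so that $r_m^m\le R$), and by Lemma~6.1 the integrand $\Psi$ satisfies hypotheses (a),(b) of Lemma~4.3. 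Lemma~4.3(ii) then gives directly that $I_1^{(m)}$ extends meromorphically to $\C$ with poles in $\{-j/a,-k/b,-(m+l)/(am+b):j,k,l\in\N\}$, which is the asserted set.

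For part (ii), on $U_2^{(m)}=\{0<x\le y^m,\ 0\le y\le r_m\}$ we instead use the other expression $f(x,y)=y^b F(x,y)$ from (\ref{eqn:5.4}), so $|f(x,y)|^s=y^{bs}|F(x,y)|^s$. The point is to estimate $\int_0^{y^m}|F(x,y)|^s\,dx$ using the van der Corput-type Lemma~4.5: since the support of $\varphi$ lies in $[-R,R]^2$, Lemma~5.3 gives $\d_x^a F(x,y)\ge\mu>0$ there, so for $\sigma={\rm Re}(s)\in(-1/a,0)$ Lemma~4.5 applied to $x\mapsto F(x,y)$ on the interval $I=(0,y^m)$ yields
\begin{equation*}
\int_0^{y^m}|F(x,y)|^{\sigma}dx\le {\mathcal C}(\sigma,a)\,\mu^{\sigma}\,y^{m(1+a\sigma)}.
\end{equation*}
Combining with the factor $y^{b\sigma}$ and integrating in $y$ over $[0,r_m]$, the integral $I_2^{(m)}(s)$ converges absolutely as long as $b\sigma+m(1+a\sigma)>-1$, i.e. ${\rm Re}(s)>-(m+1)/(am+b)$; note $-(m+1)/(am+b)\ge -1/a$ because $a(m+1)\le am+b$ is $a\le b$. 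Absolute and locally uniform convergence on this half-plane, together with the entirety of $s\mapsto|F(x,y)|^s$ off the zero set of $F$ and a dominated-convergence/Morera argument, shows $I_2^{(m)}$ is holomorphic there; one must be slightly careful about the set where $F=0$, but there $|F|^s$ is bounded by $|F|^{\sigma}$ which is already integrable, so this causes no trouble.

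Part (iii) concerns $J^{(m)}(s)=\int_{\R_+^2}|f(x,y)|^s\varphi(x,y)(1-\chi_m(y))\,dx\,dy$, where $1-\chi_m(y)$ is supported in $\{|y|\ge r_m/2\}$, so the integrand is supported away from $\{y=0\}$. On that region one can use $f=y^b F$ again with $y$ bounded below, reducing to an $x$-integral $\int|F(x,y)|^s\,dx$ controlled, as in part (ii), by Lemma~4.5 (using $\d_x^aF\ge\mu$ on $[-R,R]^2$): for ${\rm Re}(s)\in(-1/a,0)$ this $x$-integral is finite, and since $y$ ranges over a compact set bounded away from $0$ the $y^{bs}$ factor is harmless, giving holomorphy on ${\rm Re}(s)>-1/a$; alternatively one splits $\{x>y^m\}$ versus $\{x\le y^m\}$ exactly as above. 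Finally, the main theorem follows: adding the three pieces in (\ref{eqn:6.3}) shows $\tilde Z_f(\varphi)$, hence $Z_f(\varphi)$ via (\ref{eqn:6.1}), is meromorphic on ${\rm Re}(s)>-1/a$ with poles among $\{-j/a,-k/b,-(m+l)/(am+b)\}$ intersected with that half-plane; letting $m\to\infty$ kills the third family (since $(m+l)/(am+b)\to 1/a$) and the remaining candidates in ${\rm Re}(s)>-1/a$ are $\{-k/b:1\le k<b/a\}$ together with possibly $-j/a$ on the boundary line, which is excluded from the open half-plane, yielding the stated pole set. The main obstacle is part (ii): one must match the region $U_2^{(m)}$ and the exponent bookkeeping so that the van der Corput estimate of Lemma~4.5 produces exactly the threshold $-(m+1)/(am+b)$, and must handle the zero set of $F$ and the $s$-dependence of the integrand carefully enough to conclude genuine holomorphy rather than mere convergence.
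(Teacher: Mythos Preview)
Your proof is correct and follows essentially the same approach as the paper's: you use the factorization $f=x^ay^b\tilde f$ together with Lemma~6.1 and Lemma~4.3 (with $p=m$) for part~(i), and the factorization $f=y^bF$ together with Lemma~5.3 and the van der Corput--type Lemma~4.5 for parts~(ii) and~(iii), exactly as in the paper. The paper is slightly terser---it simply asserts holomorphy from convergence without the Morera/dominated-convergence commentary, and it does not explicitly mention the condition $r_m^m\le R$---but the argument is the same.
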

%%%%%%%%%

\begin{remark}
The restriction of the set of candidate poles in (i) 
to the region  ${\rm Re}(s)>-(m+1)/(am+b)$
is contained in 
$\left\{
-k/b:k \in \N \,\,{\rm with}\,\, k<b/a
\right\}$.
\end{remark}

\begin{proof}%[Proof of Lemma \ref{lem:}]
%%%%%%%%%%%%%%%%%%%%%%%%%%%%%

(i)\quad
%%%%%%%%%%%%%%%%%%%%%%
From (\ref{eqn:5.4}), 
$I_1^{(m)}(s)$ can be expressed as 
\begin{equation*}
\begin{split}
I_1^{(m)}(s)&=\int
_{U_1^{(m)}}
x^{as}y^{bs}\Psi(x,y;s)dxdy.
\end{split}
\end{equation*}
Since Lemma~6.1 implies that
$\Psi$ satisfies the same properties as those of 
$\psi$ in Lemma 4.3, 
the integral $I_1^{(m)}(s)$
can be analytically continued 
as a meromorphic function to the whole complex plane
and, moreover, its poles are contained in the set
\begin{equation*}
\left\{-\frac{j}{a},-\frac{k}{b},-\frac{m+l}{am+b}:
j,k,l\in \N\right\}.
\end{equation*}

%%%%%%%%%%%%%%%%%%%%%%%%%%%%%%%

\vspace{.5 em}

(ii) \quad
From (\ref{eqn:5.4}), 
$I_2^{(m)}(s)$ can be expressed as
\begin{equation*}
I_2^{(m)}(s)
= 
\int_{U_2^{(m)}}
y^{bs}
\left|F(x,y)
\right|^{s}
\varphi(x,y)\chi_m(y)dxdy.
\end{equation*}
It is easy to see that
\begin{equation}\label{eqn:6.5}
|I_2^{(m)}(s)|
\leq C_m 
\int_{0}^{r_m}y^{b{\rm Re}(s)}
\left(
\int_{0}^{y^{m}}
\left|F(x,y)
\right|^{{\rm Re}(s)}dx
\right)
dy,
\end{equation}
where $C_m=\sup_{(x,y)\in U_2^{(m)}}(|\varphi(x,y) \chi_m(y)|)$.
Since  
Lemma 4.5 can be applied to the integral 
with respect to the variable $x$ 
in (\ref{eqn:6.5}) from Lemma~5.3, 
if ${\rm Re}(s)>-1/a$,
then
\begin{equation}\label{eqn:6.6}
\begin{split}
|I_2^{(m)}(s)|
&< C_m {\mathcal C}({\rm Re}(s),a)\mu^{{\rm Re}(s)}
\int_{0}^{r_m}
%U(0,0) 
y^{b{\rm Re}(s)}
\left(y^{m}\right)^{1+a{\rm Re}(s)}
dy\\
&= C_m {\mathcal C}({\rm Re}(s),a)\mu^{{\rm Re}(s)}
%\left(b! 
%U(0,0)
%\right)^{{\rm Re}(s)}
\int_{0}^{r_m}
y^{(am+b){\rm Re}(s)+m}dy,
\end{split}
\end{equation}
where ${\mathcal C}(\cdot,\cdot)$
is as in Lemma~4.5 and $\mu$ is as in Lemma~5.3.  
%which are independent of $s$.
The last integral in (\ref{eqn:6.6}) converges
on the half-plane ${\rm Re}(s)>-(m+1)/(am+b)$, 
on which
$I_2^{(m)}(s)$ becomes a holomorphic function. 
We remark that 
$-(m+1)/(am+b)\geq -1/a$ holds
for all $m\in\N$.

\vspace{.5 em}

(iii)\quad
%Let us consider the integrals $J(s;N)$.
In a similar fashion to the case of 
integral $I_2^{(m)}(s)$, 
we have
\begin{equation*}
\begin{split}
|J^{(m)}(s)|\leq \tilde{C}_m
\int_{r_m/2}^{R}
y^{b{\rm Re}(s)}
\left(
\int_{0}^{R} |F(x,y)|^{{\rm Re}(s)}dx
\right)
dy,
\end{split}
\end{equation*}
where 
$\tilde{C}_m:=\sup_{(x,y)\in[0,R]\times [r_m/2,R]}
(|\varphi(x,y) (1-\chi_m(x))|).$
Applying Lemma 4.5, 
we have that
if 
%the support of $\varphi$ is sufficiently small
%and
${\rm Re}(s)>-1/a$,
then
\begin{equation*}
\begin{split}
|J^{(m)}(s)|
\leq  \tilde{C}_m {\mathcal C}({\rm Re}(s),a)\mu^{{\rm Re}(s)}
R^{1+a{\rm Re}(s)}
\int_{r_m/2}^{R}y^{b{\rm Re}(s)}dy.
\end{split}
\end{equation*}
%where $\check{C}_m$ is a positive constant.
%which are independent of $s$.
Since the above integral converges for any $s\in \C$, 
$J^{(m)}(s)$
can be analytically continued as a holomorphic function
to the half-plane ${\rm Re}(s)>-1/a$.
\end{proof}

\subsection{Proof of Theorem 3.3}

From (\ref{eqn:6.1}), (\ref{eqn:6.2}), 
(\ref{eqn:6.3}), (\ref{eqn:6.4}), 
Lemma 6.2 gives Theorem 3.3 by 
letting $m$ to infinity.

%\section{Discussion and open questions}

\appendix

\section{Newton polyhedra}
 
After giving the definitions of
Newton polyhedra, Newton distances and
adapted coordinates and briefly explaining
their properties, 
we observe our study from these points of view.  

Let $f$ be a real-valued smooth function defined 
on an open neighborhood of the origin in $\R^2$. 

\subsection{Newton polyhedra}

The Taylor series of $f$ at the origin is   
\begin{equation}\label{eqn:a.1}
f(x,y)\sim \sum_{(\alpha,\beta)\in{\Z}_+^2} 
c_{\alpha\beta}x^{\alpha}y^{\beta} 
\quad \mbox{ with $c_{\alpha\beta}=
\dfrac{1}{\alpha!\beta!}
\dfrac{\partial^{\alpha+\beta} f}
{\partial x^{\alpha}\partial y^{\beta}}(0,0)$}.
\end{equation}
%The set $S_f=\{\a\in\Z_+^n;c_{\a}\neq 0\} $
%is called the {\it support set} of $f$.  
The {\it Newton polyhedron} of $f$
is the integral polyhedron: 
$$
\Gamma_+(f)=
\mbox{the convex hull of the set 
$\bigcup \{(\a,\beta)+\R_+^2:
c_{\alpha\beta}\neq 0\}$ in $\R_+^2$}
$$
%%%%
(i.e., the intersection 
of all convex sets 
which contain $\bigcup \{(\a,\beta)+\R_+^2:c_{\alpha\beta}\neq 0\}$). 
Note that 
the flatness of $f$ at the origin is the equivalent 
to the condition:
$\Gamma_+(f)=\emptyset$.
We say that $f$ is {\it convenient} if the Newton polyhedron 
of $f$ intersects every coordinate axis.

\subsection{Newton distances}

We assume that $f$ is nonflat.
The {\it Newton distance} $d(f)$ of $f$ is defined by 
\begin{equation*}
d(f)=\inf\{\alpha>0:(\alpha,\alpha)\in\Gamma_+(f)\}.
\end{equation*}
The minimal face of $\Gamma_+(f)$ containing 
the point $(d(f),d(f))$ is called {\it principal face}
of $\Gamma_+(f)$.
Since the Newton distance depends on the coordinates system 
$(x,y)$ on which $f$ is defined, 
it is sometimes denoted by $d_{(x,y)}(f)$.

\subsection{Adapted coordinates}

A given coordinate system $(x,y)$ is said to be {\it adapted} to
$f$, 
if the equality 
$$d_{(x,y)}(f)=\sup_{(u,v)}\{d_{(u,v)}(f)\}$$
holds, 
where the supremum is taken over all local smooth coordinate 
systems $(u,v)$ at the origin. 
%where the supremum is taken over all local diffeomorphism
%$\phi$ near the origin. 
The existence of adapted coordinates 
is shown in \cite{Var76}, \cite{PSS99}, \cite{IkM11tams}, etc.
Furthermore, 
useful necessary and sufficient conditions 
for the adaptedness have been obtained.
It is known in \cite{IkM11tams} that 
if the principal face of $\Gamma_+(f)$ is a noncompact face or a vertex
of $\Gamma_+(f)$, 
then the respective coordinate is adapted to $f$. 
It follows from this fact that the function in 
(\ref{eqn:1.2}) is 
defined in an adapted coordinate.

\begin{remark}
The existence of adapted coordinates is not obvious. 
The definition of 
the adapted coordinate can be directly 
generalized in higher dimensional case.
In three-dimensional case, 
it is known in \cite{Var76} that 
there exists a function admitting
no adapted coordinate.  
\end{remark}

\subsection{The $\gamma$-part and the class $\hat{\mathcal E}(U)$ }

Any line in $\R^2$ can be expressed 
by using some pair $(a,b;l)\in\R^2\times\R$
as 
\begin{equation*}
L(a,b;l):=
\{(\alpha,\beta)\in\R^2:a\alpha+b\beta=l
\}.
\end{equation*} 
For any edge $\gamma\subset\Gamma_+(f)$, 
there exists a unique pair $(a,b;l)\in\Z_+^2\times\Z_+$
with gcd$(a,b)=1$ such that 
\begin{equation}\label{eqn:a.2}
\gamma=L(a,b;l)\cap\Gamma_+(f).
\end{equation}
For a given face $\gamma$ of $\Gamma_+(f)$,
we say that $f$ {\it admits the $\gamma$-part}
on an open neighborhood 
$U$ of the origin 
if for any $(x,y)\in U$, the limit:
\begin{equation*}
\lim_{t\to 0}
\frac{f(t^{a} x, t^{b} y)}{t^l}
\end{equation*}
exists for the pair $(a,b;l)$ 
defining $\gamma$ through (\ref{eqn:a.2}). 
This process produces the function on $U$, 
which is called the $\gamma$-{\it part of} $f$ and
denoted by $f_{\gamma}$.
When a face $\gamma$ is compact,
$f$ always admits the $\gamma$-part, 
which can be simply expressed on $U$ as  
$$
f_{\gamma}(x,y)=\sum_{(\alpha,\beta)\in\gamma\cap\Z_+^2} 
c_{\alpha\beta}x^{\alpha}y^{\beta},
$$
where $c_{\alpha\beta}$ are the same as in (\ref{eqn:a.1}).

The class $\hat{\mathcal E}(U)$ 
consists of the smooth functions admitting 
the $\gamma$-part for all the edges $\gamma$ 
of $\Gamma_+(f)$.
This class contains many kinds of 
smooth functions (see \cite{KaN16jmst}).
\begin{itemize}
\item
Every real analytic function 
belongs to $\hat{\mathcal E}(U)$. 
\item 
Every convenient function belongs to $\hat{\mathcal E}(U)$. 
\item
The Denjoy-Carleman (quasianalytic) classes
are contained in $\hat{\mathcal E}(U)$.  
\end{itemize}
It is shown in \cite{KaN16jmst}
that if $f$ belongs to the class $\hat{\mathcal E}(U)$,
then the results of Varchenko \cite{Var76} 
concerning the real analytic case
can be directly generalized.

\subsection{The case of $f$ in (\ref{eqn:1.2})}

Now let us consider the case when $f$ is as in (\ref{eqn:1.2})
with $a,b\in\N$. 
As mentioned in the beginning of Section 3, 
the Newton data of $f$ can be
easily obtained. 
Note that $f$ is not convenient. 
Moreover, the Newton polyhedron $\Gamma_+(f)$ 
has the two noncompact edges:
\begin{equation*}
\gamma_1=\{(\alpha,\beta):\alpha\geq a, \, \beta=b\},\quad
\gamma_2=\{(\alpha,\beta):\alpha=a, \, \beta\geq b\}.
\end{equation*} 

The classification in Lemma~3.1, which is obtained by 
the Taylor formula, can be expressed 
by using the admission of the $\gamma_1$,  $\gamma_2$-parts.
\begin{enumerate}
\item[(A)]
$f$ admits both the $\gamma_1$-part and the $\gamma_2$-part, i.e.,
$f\in\hat{\mathcal E}(U)$.  
\item[(B)]
$f$ admits the $\gamma_2$-part but it does not admit the $\gamma_1$-part. 
\item[(C)]
$f$ admits the $\gamma_1$-part but it does not admit the $\gamma_2$-part. 
\item[(D)]
$f$ admits neither the $\gamma_1$-part nor the $\gamma_2$-part. 
\end{enumerate}
From \cite{KaN16jmst}, 
the case (A) can be easily treated in a similar fashion 
to the real analytic case and, in particular, 
$m_0(f)=\infty$ is shown.
In the other cases, since 
$f$ does not belong to the class $\hat{\mathcal E}(U)$,
the result in \cite{KaN16jmst} cannot be applied.

\bigskip

{\it Acknowledgments.} \quad
The authors greatly appreciate that the referee 
carefully read the first version of this paper and 
gave many valuable comments.  
This work was supported by 
JSPS KAKENHI Grant Numbers 
JP15K04932,  JP19K14563,  JP15H02057.
%%%%%%%%%%%%%%%%%%%%%%%%%%%


\begin{thebibliography}{99}



\bibitem{AGV88}
V. I. Arnold, S. M. Gusein-Zade and A. N. Varchenko: 
   {\it Singularities of Differentiable Maps II}, 
   Birkh$\ddot{{\rm a}}$user, 1988. 

\bibitem{AGL19}
F. Aroca, M. G\'omez-Morales, E. Le\'on-Cardenal: 
On Archimedean zeta functions and Newton polyhedra, 
J. Math. Anal. Appl.  {\bf 473}  (2019),  
1215--1233.


%
\bibitem{Ati70}
M. F. Atiyah:
Resolution of singularities and division of distributions, 
Comm. Pure Appl. Math. {\bf 23} (1970), 145--150. 

\bibitem{BeG69}
I. N. Bernstein and S. I. Gel'fand:
Meromorphy of the function $P^{\lambda}$,  
Funktsional. Anal.  Prilozhen. {\bf 3} (1969), 
84--85. 


%\bibitem{bm04}
%E. Bierstone and P. D. Milman:
%Resolution of singularities in Denjoy-Carleman classes, 
%Selecta Math. (N.S.) {\bf 10} (2004), 1--28. 


%\bibitem{CKN13}
%K. Cho, J. Kamimoto and T. Nose:
%Asymptotic analysis of oscillatory integrals via the Newton polyhedra of
%the phase and the amplitude, 
%J. Math. Soc. Japan,
%{\bf 65} (2013), 521--562. 

\bibitem{Chr85}
M. Christ: 
Hilbert transforms along curves. I. Nilpotent groups, 
Ann. of Math. (2)  {\bf 122}  (1985), 
575--596.


\bibitem{CGP13}
T. C. Collins, A. Greenleaf and M. Pramanik:
A multi-dimensional resolution of 
singularities with applications to analysis,
Amer. J. Math.  {\bf 135}  (2013),  1179--1252. 

\bibitem{DNS05} 
J. Denef, J. Nicaise and P. Sargos: 
Oscillating integrals and Newton polyhedra, 
J. Anal. Math. {\bf 95} (2005), 147--172.

\bibitem{DeS89}
J. Denef and P. Sargos:
Poly\`edra de Newton et distribution $f_+^s$. I,
J. Anal. Math. {\bf 53} (1989), 201--218.

\bibitem{DeS92}
\bysame:
Poly\`edra de Newton et distribution $f_+^s$. II, 
Math. Ann. {\bf 293} (1992), 193--211. 

%\bibitem{ful93}
%W. Fulton:  
%   {\it Introduction to toric varieties},  
%   Princeton University Press, Princeton, NJ, 1993. 


\bibitem{GeS64}
I. M. Gel'fand and G. E. Shilov: 
{\it Generalized Functions I}. 
Academic Press, New York, 1964. 

\bibitem{Gil18}
M. Gilula: 
Some oscillatory integral estimates via real analysis, 
Math. Z. {\bf 289} (2018), 377--403.

\bibitem{Gre06}
M. Greenblatt: 
Newton polygons and local integrability of 
negative powers of smooth functions in the plane, 
Trans. Amer. Math. Soc.  {\bf 358}  (2006),  657--670. 


\bibitem{Gre09}
\bysame:
The asymptotic behavior of degenerate oscillatory integrals 
in two dimensions,
J. Funct. Anal. {\bf 257} (2009), 1759--1798.

%\bibitem{gre10ma}
%\bysame:
%Oscillatory integral decay, sublevel set growth, and the Newton polyhedron,
%Math. Ann. {\bf 346} (2010), 857--895.


\bibitem{Hir64}
H. Hironaka: 
   Resolution of singularities of an algebraic variety 
   over a field of characteristic zero I, II, Ann. of Math. 
   {\bf 79} (1964), 109--326. 



\bibitem{IKM10}
I. A. Ikromov, M. Kempe and D. M\"uller:
Estimates for maximal functions associated with hypersurfaces in 
$\R^3$ and related problems of harmonic analysis, 
Acta Math. {\bf 204} (2010), 151--271.
%
\bibitem{IkM11tams}
I. A. Ikromov and D. M\"uller: 
On adapted coordinate systems, 
Trans. Amer. Math. Soc. {\bf 363} (2011), 2821--2848. 
%
\bibitem{IkM11jfaa}
%%I. A. Ikromov and D. M\"uller: 
\bysame:
Uniform estimates for the Fourier transform of surface carried measures 
in $\mathbb{R}^3$ and an application to Fourier restriction, 
J. Fourier Anal. Appl. {\bf 17} (2011), 1292--1332.


\bibitem{IoS97}
A. Iosevich and E. Sawyer:
Maximal averages over surfaces,  
Adv. Math.  {\bf 132}  (1997),  46--119. 

%\bibitem{jea70}
%P. Jeanquartier:
%D\'eveloppement asymptotique de la distribution
%de Dirac attach\'e \`a une fonction analytique,
%C. R. Acad. Sci. Paris S\'er.
%{\bf A-B 201} (1970), A1159--A1161.

\bibitem{KaN16jmst}
J. Kamimoto and T. Nose:
Toric resolution of singularities in a certain class 
of $C^{\infty}$ functions and asymptotic analysis of 
oscillatory integrals, 
J. Math. Soc. Univ. Tokyo, {\bf 23} (2016), 425--485.

\bibitem{KaN16tams}
\bysame:
%J. Kamimoto and T. Nose:
Newton polyhedra and weighted oscillatory integrals
with smooth phases, 
Trans. Amer. Math. Soc., {\bf 368} (2016), 5301--5361.

\bibitem{KaN17}
\bysame:
%J. Kamimoto and T. Nose:
Asymptotic limit of oscillatory integrals with certain smooth phases,
RIMS K\^oky\^uroku Bessatsu., {\bf B63} (2017), 103--114.

\bibitem{KaN19}
\bysame:
%J. Kamimoto and T. Nose:
Non-polar singularities of local zeta functions
in some smooth case,
Trans. Amer. Math. Soc., {\bf 372}  (2019),  661--676.

\bibitem{KaN20}
\bysame:
%J. Kamimoto and T. Nose:
Non-polar singularities of local zeta functions
in some smooth case II, 
In preparation.
%\bibitem{mal74}
%B. Malgrange:
%Int\'egrales asymptotiques et monodromie. 
%Ann. Sci. \'Ecole Norm. Sup. (4) 7 (1974), 405--430.


%\bibitem{oda88}
%T. Oda: 
%   {\it Convex bodies and algebraic geometry. 
%   An introduction to the theory of toric varieties}. 
%   Results in Mathematics and Related Areas (3), {\bf 15} 
%   Springer-Verlag, 1988.

%\bibitem{oka86}
%M. Oka: 
%   On the resolution of the hypersurface singularities, 
%   {\it Complex Analytic Singularities},
%   Adv. Stud. in Pure Math. {\bf 8}
%   (1986), 405--436. 

\bibitem{OkT13}
T. Okada and K. Takeuchi:
Coefficients of the poles of local zeta functions and their applications to oscillating integrals,
Tohoku Math. J. {\bf 65} (2013), 159--178.

%\bibitem{ps97}
%D. H. Phong and E. M. Stein: 
%The Newton polyhedron and oscillatory integral operators, 
%Acta Math. {\bf 179} (1997), 105--152.
%
\bibitem{PSS99}
D. H. Phong, E. M. Stein and J. A. Sturm:
On the growth and stability of real-analytic functions,
Amer. J. Math. {\bf 121-3}
(1999), 519--554.



%\bibitem{py04}
%M. Pramanik and C. W. Yang:
%   Decay estimates for weighted oscillatory
%   integrals in $\R^2$,
%   Indiana Univ. Math. J., {\bf 53} (2004),
%   613--645.

\bibitem{Ryc01}
V. S. Rychkov:
Sharp $L^2$ bounds for oscillatory integral 
operators with $C^{\infty}$ phases,
Math. Z., {\bf 236} (2001), 461--489.

%\bibitem{sch91}
%H. Schulz:  
%   Convex hypersurfaces of finite type and 
%   the asymptotics of their Fourier transforms, 
%   Indiana Univ. Math. J., {\bf 40} (1991), 1267--1275. 

%\bibitem{ste93}
%E. M. Stein: {\it Harmonic Analysis. Real-variable methods, 
%orthogonality and oscillatory integrals}, Princeton University Press, 
%Princeton, NJ, 1993.  


%\bibitem{thi08}
%V. Thilliez: 
%On quasianalytic local rings, Expo. Math. {\bf 26} 
%(2008), 1--23. 


\bibitem{Var76}
A. N. Varchenko: 
   Newton polyhedra and estimation of oscillating 
   integrals, Functional Anal. Appl., {\bf 10-3} (1976), 175--196.

%\bibitem{var2}
%\bysame:
%   Zeta-function of monodromy and Newton's diagram,
%   Invent. Math., {\bf 37} (1976), 253--262.


%\bibitem{vas77}
%V. A. Vassiliev: 
%   The asymptotic behavior of exponential integrals, 
%   the Newton diagram and the classification of minima, 
%   Funct. Anal. Appl., {\bf 11} (1977), 163--172.


%\bibitem{vas79}
%V. A. Vassiliev:
%   Asymptotic behavior of exponential integrals 
%   in the complex domain, 
%   Funct. Anal. Appl., (1979), 239--247.


%\bibitem{zie95}
%G. M. Ziegler: {\it Lectures on Polytopes}, 
%Graduate Texts in Mathematics, {\bf 152}. 
%Springer-Verlag, New York, 1995.



\end{thebibliography}
\end{document}